\documentclass[12pt]{amsart}
\usepackage{amsfonts,color,morefloats,pslatex}
\usepackage{amssymb,amsthm, amsmath,latexsym}

\newtheorem{theorem}{Theorem}
\newtheorem{lemma}[theorem]{Lemma}

\newtheorem{example}{Example}

\def\qi#1 {\fbox {\footnote {\ }}\ \footnotetext { From Qi: {\color{red}#1}}}

\begin{document}
\title[Infinite families of $2$-designs from linear codes]{Infinite families of $2$-designs from a class of non-binary Kasami cyclic codes}

\author{Rong Wang}
\address{College of Mathematics and Statistics, Northwest Normal University, Lanzhou, Gansu 730070, China}
\curraddr{}
\email{rongw113@126.com}
\author{Xiaoni Du}
\address{College of Mathematics and Statistics, Northwest Normal University, Lanzhou, Gansu 730070, China}
\curraddr{}
\email{ymldxn@126.com}

\author{Cuiling Fan}
\address{ School of Mathematics,
 Southwest Jiaotong,
 University, Chengdu, China}
\curraddr{}
\email{ cuilingfan@163.com}

%

\thanks{}
\date{\today}

\maketitle

\begin{abstract}
Combinatorial $t$-designs have been an important research subject for many years, as they have wide applications in coding theory, cryptography, communications and statistics. The interplay between coding theory and $t$-designs has been attracted a lot of attention for both directions. It is well known that a linear code over any finite field can be derived from the incidence matrix of a $t$-design, meanwhile, that the supports of all codewords with a fixed weight in a code also may hold a $t$-design. In this paper, by determining the weight distribution of a class of linear codes derived from non-binary Kasami cyclic codes, we obtain infinite families of $2$-designs from the supports of all codewords with a fixed weight in these codes, and calculate their parameters explicitly.

\noindent\textbf{Keywords:} linear codes, cyclic codes, affine-invariant codes, exponential sums, weight distributions, $2$-designs
\end{abstract}

\section{Introduction}
We begin this paper by a brief introduction to $t$-designs.
Let $t,k$ and $v$ be positive integers satisfying  $1\leq t \leq k \leq v.$
Let $\mathcal{P}=\{0, 1, \ldots, v-1\}$, $\mathcal{B}$ be a multi-set of $k$-subsets of $\mathcal{P},$  and the size of $\mathcal{B}$ be denoted by $b$.  If every $t$-subset of $\mathcal{P}$ is contained in exactly $\lambda$ elements of $\mathcal{B},$ then we call the pair $\mathbb{D}=(\mathcal{P}, \mathcal{B})$ a $t$-$(v,k,\lambda)$ design, or simply a {\em
$t$-design}. The elements of $\mathcal{P}$ are called {\em points}, and those of $\mathcal{B}$ are referred to as {\em blocks}. A $t$-design is {\em simple} when $\mathcal{B}$ does not contain repeated blocks, and a $t$-design is called {\em symmetric} if $v=b$ and {\em trivial} if $k=t$ or $k=v$. In general,  researchers pay only attention to simple $t$-designs with $t < k < v$. When $t\geq
2$ and $\lambda=1,$ we call the $t$-$(v, k, \lambda)$ design a  {\em Steiner system}. The following identity is a necessary condition for a $t$-$(v,k,\lambda)$ design.
\begin{equation}\label{condition}
  b {k \choose t} = \lambda {v \choose t}.
\end{equation}

There is an intimate relationship between linear codes and $t$-designs which has been attracted a lot of attention during these decades. On  one hand, a linear code over any {\em finite field} can be derived from the incidence matrix of a $t$-design and much progress has been made (see for example \cite{AB92,DD15,Ton98,Ton07}). On the other hand, linear and nonlinear codes both can give  $t$-designs. There are two standard way to derive $t$-designs from linear codes. One is based on Assums-Matterson Theorem \cite{AM69,AM74} and the other is via the automorphism group of a linear code $\mathcal{C}$ which $\mathcal{C}$ holds $t$-designs if the permutation part of the automorphism group acts $t$-transitively on the code $\mathcal{C}$ \cite{AK92,MS771}.   
Very recently, infinite families of $2$-designs and $3$-designs were obtained from several different classes of linear codes by Ding \cite{Ding182}, and Ding and Li \cite{DL17} by applying the Assums-Matterson Theorem. After that, with the latter method, Du {\em et al.} have derived infinite families of $2$-designs from some different classes of affine-invariant codes \cite{DWTW190,DWTW191,DWF192}. Some other constructions of $t$-designs can be found in \cite{BJL99,CM06,MS771,RR10}. In this paper, following the approach of $2$-transitivity, we will derive some infinitely classes of $2$-designs from a class of linear codes derived from non-binary Kasami cyclic codes.

Now we review some basic definitions of linear codes. Let $p$ be an  prime and $m$ a positive integer.  Let $\mathbb{F}_q$ denote the finite field with $q=p^m$ elements and $\mathbb{F}^*_q=\mathbb{F}_q\backslash\{0\}$. An $[n,k,\delta ]$  {\em linear code} $\mathcal{C}$ over $\mathbb{F}_p$ is a $k$-dimensional subspace of $\mathbb{F}_p^n$ with minimum Hamming distance $\delta $, and is called a {\em cyclic code} if each codeword $(c_0,c_1,\ldots,c_{n-1})\in\mathcal{C}$ implies $(c_{n-1}, c_0, c_1, \ldots,
c_{n-2}) \in \mathcal{C}$. It is well known that any cyclic code $\mathcal{C}$ can be expressed as $\mathcal{C} = \langle g(x) \rangle$, where $g(x)$ is monic and has the least degree. The polynomial  $g(x)$ is called the  {\em generator polynomial} and $h(x)=(x^n-1)/g(x)$ is referred to as the {\em parity-check polynomial} of $\mathcal{C}$. We call the $[n,n-k]$ code, whose generator polynomial is $x^kh(x^{-1})/h(0)$,  the {\em dual} of $\mathcal{C}$, denoted by $\mathcal{C}^{\bot}$.  Furthermore, we define the  {\em extended code} $\overline{\mathcal{C}}$ of $\mathcal{C}$ to be
$$\overline{\mathcal{C}}=\{(c_0, c_1, \ldots, c_n) \in \mathbb{F}_p^{n+1}: (c_0, c_1, \ldots, c_{n-1}) \in \mathcal{C} ~\mathrm{with} ~\sum^n_{i=0}c_i=0\}.
$$

Let $A_i$ be the number of codewords with Hamming weight $i$ in a code $\mathcal{C}$. The {\em weight enumerator} of $\mathcal{C}$ is defined by
$1+A_1z+A_2z^2+\ldots+A_nz^n,$
and the sequence $(1, A_1, \ldots, A_n)$ is called the {\em weight distribution} of the code $\mathcal{C}.$ If the number of nonzero $A_i$'s with $1 \leq i \leq n$ is $w$, then we call $\mathcal{C}$ a {\em $w$-weight code}. If we index the coordinates of a codeword  by $(0, 1, \ldots, n-1)$, then for any codeword $\mathbf{c}=(c_0, c_1, \ldots, c_{n-1}) \in \mathcal{C},$ the {\em support} of $\mathbf{c}$ is defined by
$$Suppt(\mathbf{c})=\{0\leq i \leq n-1: c_i\neq 0\}\subseteq \{0, 1, \ldots, n-1\}.$$
And so  for each $i$ with
$A_i\neq 0$, let $\mathcal{B}_i$ define the set of the supports of all codewords with weight $i$ in a code $\mathcal{C}$. Let  $\mathcal{P}=\{0,1,\ldots, n-1\}$. Then the pair $(\mathcal{P},\mathcal{B}_i)$ may be a $t$-$(n,i,\lambda)$ design for some $\lambda$ with the well known results given in literature \cite{DL17}. In this case, we say that the codewords of weight $i$ in $\mathcal{C}$ hold a $t$-design, which is called a support design of $\mathcal{C}$.

In the rest of  this paper, we always suppose that  $p$ is an odd prime and $m=2s$, $l(l\neq s)$ are positive integers with $0\leq l\leq m-1$, $gcd(s+l, 2l)=d'$ and $gcd(s, l)=d$. Let $q=p^m$ and  $\mathbb{C}$ be the cyclic code with
length $n=q-1$ and parity-check polynomial $h_1(x)h_{p^s+1}(x)h_{p^l+1}(x),$ where $h_1(x),h_{p^s+1}(x)$ and $h_{p^l+1}(x)$ are the minimal polynomials of $\alpha,$ $\alpha^{p^s+1}$ and $\alpha^{p^l+1}$ over $\mathbb{F}_p,$ respectively, and $\alpha$ is a primitive element of $\mathbb{F}_q$. One can see that $\mathbb{C}$ is the non-binary Kasami cyclic code. Luo {\em et al.} \cite{LTW09} determined  the weight distribution of  $\mathbb{C}$ by explicitly computing the exponential sums since  the weight of each codeword can be expressed by certain exponential sums, and which is a general approach to investigate the weight distribution of cyclic code (see \cite{FY05,Vlugt95} and the references therein).

With the definition of $\mathbb{C}$, we deduce the following linear codes ${\overline{{\mathbb{C}}^{\bot}}}^{\bot}$.
\begin{eqnarray}\label{code-1}
{\overline{\mathbb{C}^\bot}}^\bot:=\{ (Tr_s (ax^{p^s+1})&+& Tr_m( bx^{p^l+1}+cx)+h ): \\
& & a\in \mathbb{F}_{p^s}, x, b , c \in \mathbb{F}_q, h\in \mathbb{F}_p \}, \nonumber
\end{eqnarray}
where  $Tr_r(x)= x + x^{p}+\ldots+x^{p^{r-1}}$ is the  absolute trace function from  $\mathbb{F}_{p^r}$ onto $\mathbb{F}_{p}$ for any positive integer $r$.
In this paper, we will  determine the weight distribution of the linear codes ${\overline{{\mathbb{C}}^{\bot}}}^{\bot}$ and obtain infinite families of  $2$-designs from its weight distribution.

The remainder of this paper is organized as follows. In Section \ref{section-2}, we introduce some preliminary results on exponential sums, cyclotomic fields, affine invariant codes and $2$-designs, which will be used in subsequent sections. In Section \ref{theorem}, we describe the weight distribution of ${\overline{{\mathbb{C}}^{\bot}}}^{\bot}$ given in Eq.(\ref{code-1}) and then present infinite families of $2$-designs with the explicit parameters. In Section \ref{section-4}, we first prove the weight distribution of ${\overline{{\mathbb{C}}^{\bot}}}^{\bot}$  by computing certain exponential sums, and then present the proofs of the results of the $2$-designs given in Section \ref{theorem}. Section \ref{section-5} concludes the paper.

\section{Preliminaries}\label{section-2}
In this section, we summarize some basic facts on affine-invariant codes, $2$-designs, exponential sums and cyclotomic fields. We should mention that most of the known results hold for any positive integer $m$ and prime $p$ unless otherwise stated.


\subsection{Affine-invariant codes and 2-designs}

We begin this subsection by the introduction of affine-invariant codes.

The permutation automorphism group of $\mathcal{C}$, denoted by $PAut(\mathcal{C})$, is the set of coordinate permutations that map a code $\mathcal{C}$ to itself. We define the affine group $GA_1(\mathbb{F}_q)$ by the set of all permutations
$$\sigma_{a,b}: x\mapsto  ax+b $$
of $\mathbb{F}_q$, where $a \in \mathbb{F}_q^*$ and  $b \in \mathbb{F}_q.$
An {\em affine-invariant} code is an extended cyclic code $\overline {\mathcal{C}}$ over $\mathbb{F}_p$ such that $GA_1(\mathbb{F}_q)\subseteq PAut(\overline{\mathcal{C}})$ \cite{HP03}.

For any integers $r,s \in \mathcal{P}$, we define $r\preceq s$ if $r_i \leq s_i$ for all $0\leq i\leq m-1$, where $s=\sum^{m-1}_{i=0}s_ip^i,~0\leq s_i\leq p-1$ and  $r=\sum^{m-1}_{i=0}r_ip^i,~0\leq r_i\leq p-1$  are the $p$-adic expansions of $s$ and $r$, respectively. Clearly, we have $r \le s $ if
 $r\preceq s$.

%

Let $C_j$ be the  {\em $p$-cyclotomic coset} modulo $n$ containing $j$, i.e.,
$$C_j=\{jp^i \pmod n: 0 \le i \le \ell_j-1\},$$
where $j$ is any integer with $0\leq j< n$ and   $\ell_j$ is the smallest positive integer such that $j\equiv jp^{\ell_j}\pmod n.$ The smallest integer in $C_j$ is called the {\em coset leader} of $C_j$. Thus the  generator polynomial
$g(x)$ of a code $\mathcal{C}$ can be write as $g(x)=\prod_j\prod_{e\in C_j}(x-\alpha^e)$, where $j$ runs through some coset leaders of the $p$-cyclotomic cosets $C_j$ modulo $n.$  The set   $T=\bigcup_jC_j$ is referred to as the {\em defining set} of $\mathcal{C}$, which is the union of these $p$-cyclotomic cosets.

 Affine-invariant  codes  are a very important class of linear codes to present $t$-designs.  The first item of following lemma given by Kasami, Lin and Peterson \cite{KLP67} is very important since it points out that one can determine whether a given extended primitive cyclic code $\overline{\mathcal{C}}$ is affine-invariant or not by analysing the properties of its defining set. The second item proposed by Ding \cite{Ding18b} shows that one can get  new affine-invariant codes from the known ones.

\begin{lemma}\label{Kasami-Lin-Peterson} Let $\overline{\mathcal{C}}$ be an extended cyclic code of
  length $p^m$ over $\mathbb{F}_p$ with defining set $\overline{T}$. Then

(1) \cite{KLP67}  The code $\overline{\mathcal{C}}$ is affine-invariant if and
only if whenever $s \in   \overline{T}$  then $r \in \overline{T}$ for all $r \in \mathcal{P}$  with $r \preceq s$.

(2) \cite{Ding18b} The dual of an affine-invariant code $\overline{\mathcal{C}}$ over $\mathbb{F}_p$
is also affine-invariant.
\end{lemma}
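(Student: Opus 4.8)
The plan is to prove both items with the standard machinery for extended primitive cyclic codes. Identify the $q$ coordinates of $\overline{\mathcal{C}}$ with the elements of $\mathbb{F}_q$, so that a word $(c_x)_{x\in\mathbb{F}_q}$ lies in $\overline{\mathcal{C}}$ precisely when $\sum_{x\in\mathbb{F}_q}c_x x^{i}=0$ for every $i\in\overline{T}$, using the convention $0^0=1$. The first observation, which reduces everything to translations, is that $GA_1(\mathbb{F}_q)$ is generated by the translations $x\mapsto x+b$ together with the multiplications $x\mapsto ax$, and every multiplication already lies in $PAut(\overline{\mathcal{C}})$ because $\overline{\mathcal{C}}$ is the extension of a cyclic code (multiplication by a primitive element is the cyclic shift, which fixes the coordinate $0$). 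Hence $\overline{\mathcal{C}}$ is affine-invariant if and only if it is invariant under every translation $x\mapsto x+b$, $b\in\mathbb{F}_q$.

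For item (1) I would expand the translated defining-set conditions. Replacing $c_x$ by $c_{x+b}$ and substituting $y=x+b$ turns $\sum_x c_{x+b}x^{i}$ into $\sum_y c_y(y-b)^{i}$; expanding $(y-b)^{i}$ by the binomial theorem over $\mathbb{F}_p$ and invoking Lucas' theorem, which says that $\binom{i}{j}\not\equiv 0\pmod p$ exactly when $j\preceq i$, one gets $\sum_y c_y(y-b)^{i}=\sum_{j\preceq i}\binom{i}{j}(-b)^{\,i-j}\bigl(\sum_y c_y y^{j}\bigr)$, with some routine care needed to treat the exponents $0$ and $q-1$ consistently. If $\overline{T}$ is closed downwards under $\preceq$, then for any $c\in\overline{\mathcal{C}}$ and $i\in\overline{T}$ every inner sum $\sum_y c_y y^{j}$ with $j\preceq i$ vanishes, so the translate of $c$ again satisfies all defining-set conditions; this is the ``if'' direction. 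For the converse, assuming affine-invariance, the displayed quantity vanishes for every $b\in\mathbb{F}_q$; since the exponents $i-j$ that occur are pairwise distinct elements of $\{0,1,\dots,q-1\}$ and the monomial functions $b\mapsto b^{k}$, $0\le k\le q-1$, form a basis of the space of functions $\mathbb{F}_q\to\mathbb{F}_q$, each coefficient $\binom{i}{j}\sum_y c_y y^{j}$ must be zero; as $\binom{i}{j}\neq 0$ in $\mathbb{F}_p$ for $j\preceq i$, we get $\sum_y c_y y^{j}=0$ for all $c\in\overline{\mathcal{C}}$, i.e. $j\in\overline{T}$. Thus $\overline{T}$ is downward closed.

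For item (2) I would combine item (1) with one combinatorial fact about digit complementation: writing $s=\sum_{i=0}^{m-1}s_i p^i$ we have $q-1-s=\sum_{i=0}^{m-1}(p-1-s_i)p^i$, so $r\preceq s$ if and only if $(q-1-s)\preceq(q-1-r)$; that is, complementation reverses the order $\preceq$. Together with the standard description of the defining set of the dual of an extended primitive cyclic code, namely $s\in\overline{T}^{\bot}$ if and only if $q-1-s\notin\overline{T}$ (again with the usual conventions at $0$ and $q-1$), the argument is immediate. If $\overline{\mathcal{C}}$ is affine-invariant then $\overline{T}$ is downward closed by item (1); given $s\in\overline{T}^{\bot}$ and $r\preceq s$, we have $q-1-s\notin\overline{T}$ and $q-1-s\preceq q-1-r$, so $q-1-r\notin\overline{T}$ as well, for otherwise downward closure of $\overline{T}$ would force $q-1-s\in\overline{T}$; hence $r\in\overline{T}^{\bot}$, so $\overline{T}^{\bot}$ is downward closed and $\overline{\mathcal{C}}^{\bot}$ is affine-invariant by item (1).

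The calculations I am skipping --- the binomial/Lucas expansion with the bookkeeping of the exponents $0$ and $q-1$, and the coefficient extraction from linear independence of monomials --- are routine. The main obstacle is pinning down the conventions: one must fix once and for all how the extended coordinate $0\in\mathbb{F}_q$ and the exponents $0$ and $q-1$ enter the defining set, and then check that the description $\overline{T}^{\bot}=\{\,s:\ q-1-s\notin\overline{T}\,\}$ holds under that convention; once this is settled, both items reduce to the two elementary facts above (downward stability of nonzero binomial coefficients, and order reversal under digit complementation).
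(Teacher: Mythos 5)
The paper does not prove this lemma at all: item (1) is quoted from Kasami--Lin--Peterson \cite{KLP67} and item (2) from Ding's book \cite{Ding18b}, so there is no in-paper argument to compare against. Your reconstruction is the standard proof and its skeleton is sound: reducing affine-invariance to translation-invariance (since $x\mapsto\alpha x$ is the cyclic shift fixing the extended coordinate), the binomial--Lucas expansion of $\sum_y c_y(y-b)^i$, and coefficient extraction from the fact that a polynomial of degree at most $q-1$ vanishing on all of $\mathbb{F}_q$ is zero --- this is exactly the Kasami--Lin--Peterson argument. Two remarks. First, for item (2) there is a shorter route that avoids defining sets entirely: for any linear code, $PAut(\mathcal{C})=PAut(\mathcal{C}^{\bot})$ because a coordinate permutation preserves the standard inner product, so $GA_1(\mathbb{F}_q)\subseteq PAut(\overline{\mathcal{C}})$ immediately gives $GA_1(\mathbb{F}_q)\subseteq PAut(\overline{\mathcal{C}}^{\bot})$; the only residual work is checking that $\overline{\mathcal{C}}^{\bot}$ is again an extended cyclic code, which is also the one point your complementation argument must address before item (1) can legitimately be applied to $\overline{\mathcal{C}}^{\bot}$. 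Second, the step you defer --- that $\overline{T}^{\bot}=\{s: q-1-s\notin\overline{T}\}$ with $0\in\overline{T}^{\bot}$ and $q-1\notin\overline{T}^{\bot}$ --- is where all the convention-sensitivity is concentrated (note $0\in\overline{T}$ forces $q-1\notin\overline{T}^{\bot}$, and $q-1\notin\overline{T}$ gives $0\in\overline{T}^{\bot}$, so the bookkeeping does close up); it is true under the standard normalization, but it is the one ingredient that is not ``routine'' in the same sense as the Lucas and digit-complementation facts, and a complete write-up would have to prove it rather than cite it. With that caveat, the proposal is correct.
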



\begin{theorem}\label{2-design}
  \cite{Ding18b} For each $i$ with $A_i \neq 0$ in an affine-invariant code $\overline{\mathcal{C}}$ with length $p^m$, the supports of the codewords of weight $i$ form a $2$-design.
\end{theorem}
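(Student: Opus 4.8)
The plan is to invoke the standard principle that a coordinate-permutation group acting $t$-transitively on the coordinates of a code turns the supports of the codewords of each fixed weight into a $t$-design, applied here with $t=2$ and the group $GA_1(\mathbb{F}_q)$ supplied by affine-invariance.

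First I would check that $GA_1(\mathbb{F}_q)$ acts $2$-transitively (in fact sharply $2$-transitively) on the coordinate set $\mathcal{P}=\mathbb{F}_q$, which has size $p^m$. Given two ordered pairs $(x_1,x_2)$ and $(y_1,y_2)$ of distinct elements of $\mathbb{F}_q$, the linear system $ax_1+b=y_1$, $ax_2+b=y_2$ has the unique solution $a=(y_1-y_2)(x_1-x_2)^{-1}\in\mathbb{F}_q^{*}$ and $b=y_1-ax_1$, so $\sigma_{a,b}\in GA_1(\mathbb{F}_q)$ sends $\{x_1,x_2\}$ to $\{y_1,y_2\}$.

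Second, I would record that since $\overline{\mathcal{C}}$ is affine-invariant, $GA_1(\mathbb{F}_q)\subseteq PAut(\overline{\mathcal{C}})$ by definition. Any coordinate permutation preserves Hamming weights and carries the support of a codeword onto the image of that support under the permutation; hence each $\sigma\in GA_1(\mathbb{F}_q)$ permutes the set $\mathcal{B}_i$ of supports of the weight-$i$ codewords of $\overline{\mathcal{C}}$. In other words, $GA_1(\mathbb{F}_q)$ acts on the incidence structure $(\mathcal{P},\mathcal{B}_i)$ by automorphisms.

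Finally, fix $2$-subsets $\{x_1,x_2\}$ and $\{y_1,y_2\}$ of $\mathcal{P}$ and pick $\sigma\in GA_1(\mathbb{F}_q)$ with $\sigma(\{x_1,x_2\})=\{y_1,y_2\}$; then $B\mapsto\sigma(B)$ is a bijection from $\{B\in\mathcal{B}_i:\{x_1,x_2\}\subseteq B\}$ onto $\{B\in\mathcal{B}_i:\{y_1,y_2\}\subseteq B\}$, so the number $\lambda$ of blocks of $\mathcal{B}_i$ containing a given $2$-subset does not depend on the subset. Together with $\mathcal{B}_i\neq\emptyset$ (which holds since $A_i\neq0$), this shows $(\mathcal{P},\mathcal{B}_i)$ is a $2$-$(p^m,i,\lambda)$ design, and it is simple because $\mathcal{B}_i$ is taken as a set of supports. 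The argument is essentially routine; the only point needing a little care is the bookkeeping of repeated supports, since distinct codewords (for instance nonzero $\mathbb{F}_p$-scalar multiples of one another) may share a support. This is harmless: one works with $\mathcal{B}_i$ as a set, and $\sigma$ still permutes it because $\sigma$ permutes the codewords and hence permutes the underlying collection of supports. For the degenerate small values of $i$ in which every block has size less than $2$, the statement holds vacuously with $\lambda=0$.
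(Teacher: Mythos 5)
Your proof is correct and is exactly the intended argument: the paper does not prove this statement itself (it is quoted from \cite{Ding18b}), but the mechanism it relies on is precisely the one you use, namely that affine-invariance puts the sharply $2$-transitive group $GA_1(\mathbb{F}_q)$ inside $PAut(\overline{\mathcal{C}})$, and a $2$-transitive automorphism group forces the number of weight-$i$ supports through any $2$-subset to be constant. Your side remarks on repeated supports and on degenerate weights are the right points to flag, and they are handled correctly.
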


Lemma \ref{Kasami-Lin-Peterson} and Theorem \ref{2-design} are very attractive in the sense that they can be used to examine the existence of $2$-designs. We will employ them later to present the main results of this paper.

The following theorem given by Ding in \cite{Ding18b} presents the relation of all codewords with the same support in a linear code $\mathcal{C}$, which will be used together with Eq.(\ref{condition}) to calculate the parameters of $2$-designs later.

\begin{theorem}\label{design parameter} \cite{Ding18b} Let $\mathcal{C}$ be a linear code over $\mathbb{F}_p$ with minimum weight $\delta$ and length $n$. Let $w$ be the largest integer with $w\leq n$ satisfying
 $${w-\lfloor\frac{w+p-2}{p-1}\rfloor}<\delta.$$
Let $\mathbf{c_1}$ and $\mathbf{c_2}$ be two codewords of weight $i$ with $\delta \leq i\leq w$ and $Suppt(\mathbf{c_1})=Suppt(\mathbf{c_2}).$ Then $\mathbf{c_1}=a\mathbf{c_2}$ for some $a\in \mathbb{F}_p^*$.
\end{theorem}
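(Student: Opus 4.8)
The goal is to prove Theorem~\ref{design parameter}, which asserts that under the stated numerical condition on $w$, any two codewords of the same weight $i$ (with $\delta \le i \le w$) sharing the same support must be scalar multiples of one another over $\mathbb{F}_p^*$. The plan is to argue by contradiction: suppose $\mathbf{c_1}$ and $\mathbf{c_2}$ have the same support $S$ of size $i$ but are not scalar multiples, and then produce from them a nonzero codeword whose weight is strictly less than $\delta$, contradicting the minimality of $\delta$.

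First I would form the family of codewords $\mathbf{c}_a := \mathbf{c_1} - a\,\mathbf{c_2}$ for $a \in \mathbb{F}_p^*$; each such $\mathbf{c}_a$ is a codeword supported inside $S$, since both $\mathbf{c_1}$ and $\mathbf{c_2}$ vanish off $S$. If $\mathbf{c_1}$ were never equal to $a \mathbf{c_2}$ for any $a$, then each $\mathbf{c}_a$ is nonzero, hence has weight at least $\delta$, so its support has size at least $\delta$; equivalently, for each coordinate $j \in S$, the number of $a \in \mathbb{F}_p^*$ for which the $j$-th coordinate of $\mathbf{c}_a$ vanishes is limited. The key counting step is to look, coordinatewise, at how often cancellation $c_{1,j} = a\, c_{2,j}$ can occur: for a fixed $j \in S$, this equation in $a$ has exactly one solution $a \in \mathbb{F}_p^*$ (namely $a = c_{1,j} c_{2,j}^{-1}$, which is nonzero since $j \in S$ means both entries are nonzero). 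Summing the total number of zero entries over all $p-1$ codewords $\mathbf{c}_a$ gives exactly $i$ zeros in total (one per coordinate of $S$). On the other hand, if all $\mathbf{c}_a$ are nonzero, the number of zero entries of each is at most $i - \delta$, so the total is at most $(p-1)(i-\delta)$.

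Combining these two counts yields $i \le (p-1)(i - \delta)$, which rearranges to $\delta \le i - \frac{i}{p-1}$, i.e. roughly $i - \lfloor \frac{i+p-2}{p-1} \rfloor \ge \delta$. But the hypothesis that $i \le w$, together with the definition of $w$ as the largest integer with $w - \lfloor \frac{w+p-2}{p-1}\rfloor < \delta$, forces $i - \lfloor \frac{i+p-2}{p-1}\rfloor < \delta$, a contradiction. Hence some $\mathbf{c}_a$ must be zero, which is exactly the conclusion $\mathbf{c_1} = a \mathbf{c_2}$. The one piece of care needed is to match the floor-function bookkeeping precisely: one must check that $(p-1)(i-\delta) \ge i$ is equivalent (for integers) to $i - \lfloor \frac{i+p-2}{p-1}\rfloor \ge \delta$, so that the assumed inequality $i \le w$ genuinely rules it out. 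I expect this elementary but slightly finicky manipulation of floors — rather than the counting argument itself — to be the main obstacle, and I would handle it by writing $i = (p-1)t + e$ with $0 \le e \le p-2$ and checking both sides explicitly.
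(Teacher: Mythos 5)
The paper itself gives no proof of this theorem---it is imported verbatim from \cite{Ding18b}---so there is nothing to compare against except the standard argument in that reference, which is exactly the double-counting you propose. Your proof is correct: the $p-1$ codewords $\mathbf{c_1}-a\mathbf{c_2}$ are supported inside $S$, each coordinate of $S$ is annihilated by exactly one $a\in\mathbb{F}_p^*$, and if none of these codewords were zero you would get $i\le (p-1)(i-\delta)$, which (using $\lfloor\frac{i+p-2}{p-1}\rfloor=\lceil\frac{i}{p-1}\rceil$ and the integrality of $\delta$) is indeed equivalent to $i-\lfloor\frac{i+p-2}{p-1}\rfloor\ge\delta$. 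The one step you assert without justification is that $i\le w$ forces $i-\lfloor\frac{i+p-2}{p-1}\rfloor<\delta$: knowing that $w$ is the \emph{largest} integer with this property does not by itself say anything about smaller integers, so you should add the (easy) observation that $f(x)=x-\lceil x/(p-1)\rceil$ is non-decreasing (its successive differences are $0$ or $1$), whence the set of integers satisfying $f(x)<\delta$ is an initial segment and $f(i)\le f(w)<\delta$ follows. With that line inserted, the argument is complete and matches the source.
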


\subsection{Exponential sums}

Let $\zeta_N=e^{2\pi \sqrt{-1}/N}$ be a primitive $N$-th root of unity for any positive integer $N \ge 2$. An additive character of $\mathbb{F}_q$ is a nonzero function $\chi$ from $\mathbb{F}_q$ to the set of complex numbers of absolute value $1$ such that $\chi(x+y)=\chi(x)\chi(y)$ for any pair $(x,y) \in \mathbb{F}_q^2$. For each $u \in \mathbb{F}_q$, the function
$$\chi_u(v)=\zeta_p^{Tr_m( uv)},~v \in \mathbb{F}_q$$
denotes an additive character of $\mathbb{F}_q$. Since $\chi_0(v)=1$ for all $v \in \mathbb{F}_q$, we call $\chi_0$ the  {\em trivial} additive character of $\mathbb{F}_q$. We call $\chi_1$ the {\em canonical} additive character of $\mathbb{F}_q$ and we have  $\chi_u(x)=\chi_1(ux)$ for all $u\in\mathbb{F}_q$ \cite{LN97}.

Let $\alpha$ be a primitive element of $\mathbb{F}_q$. For each $j=0,1,\ldots,q-2,$ a multiplicative character  of $\mathbb{F}_q$  is defined by the function $\psi_j(\alpha^k)=\zeta_{q-1}^{jk},$ for $k=0,1,\ldots,q-2.$    We denote by $\eta:=\psi_{\frac{q-1}{2}}$ which  is called the {\em quadratic character} of $\mathbb{F}_q.$ Similarly,  we may define the quadratic character  $\eta ^{'}$  of $\mathbb{F}_p.$ For simplicity, we extend the quadratic characters by setting $\eta(0)=0$ and $\eta'(0)=0$.

The {\em Gauss sum} $G(\eta', \chi'_1)$ over $\mathbb{F}_p$ is defined by $G(\eta', \chi'_1)=\sum\limits_{v \in \mathbb{F}_p^*}\eta'(v)\chi'_1(v)=\sum\limits_{v \in \mathbb{F}_p}\eta'(v)\chi'_1(v)$, where $\chi'_1$ is the canonical additive character of $\mathbb{F}_p$. For {\em Gauss sums}, we have the following  basic fact which is essential to examine the weight distribution of the code ${\overline{{\mathbb{C}}^{\bot}}}^{\bot}$ defined in Eq.(\ref{code-1}).

\begin{lemma}\label{Gauss} \cite{LN97} With the notation above, we have
$$G(\eta', \chi'_1)={\sqrt{(-1)}^{(\frac{p-1}{2})^2}}\sqrt{p}=\sqrt{p^*},$$
where $p^*=(-1)^{\frac{p-1}{2}}p$.
\end{lemma}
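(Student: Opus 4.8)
The final statement to be proved is Lemma~\ref{Gauss}, the classical evaluation of the quadratic Gauss sum over $\mathbb{F}_p$, namely $G(\eta',\chi'_1) = \sqrt{(-1)^{((p-1)/2)^2}}\sqrt{p} = \sqrt{p^*}$ with $p^* = (-1)^{(p-1)/2}p$.

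\medskip

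\textbf{Proof proposal.} The plan is to proceed in two stages: first pin down the \emph{modulus} $|G(\eta',\chi'_1)| = \sqrt{p}$, which is elementary, and then determine the precise \emph{sign} (or more accurately, which of the two square roots of $p^*$ occurs), which is the genuinely hard part. For the modulus, I would compute $|G(\eta',\chi'_1)|^2 = G(\eta',\chi'_1)\overline{G(\eta',\chi'_1)}$ directly: writing the sum twice and using $\overline{\chi'_1(v)} = \chi'_1(-v)$ together with $\eta'(-1) = (-1)^{(p-1)/2}$, one gets a double sum $\sum_{u,v\in\mathbb{F}_p^*}\eta'(u/v)\chi'_1(u-v)$; substituting $u = vw$ and summing the inner geometric-type character sum over $v$ yields $p$ on the nose (the orthogonality relation $\sum_{v}\chi'_1(v(w-1)) = p\cdot[w=1] - 1$ does the job, after absorbing the $\sum_w \eta'(w) = 0$ term). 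This also shows $G(\eta',\chi'_1)^2 = \eta'(-1)p = p^*$, so $G(\eta',\chi'_1) = \pm\sqrt{p^*}$ and it only remains to resolve the sign.

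\medskip

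For the sign determination — the main obstacle — I would use Gauss's original argument via a product formula, since a purely algebraic manipulation of the sum cannot distinguish $\sqrt{p^*}$ from $-\sqrt{p^*}$. Embedding everything into $\mathbb{C}$ with $\zeta_p = e^{2\pi\sqrt{-1}/p}$, one identifies $G(\eta',\chi'_1)$ with $\sum_{a=0}^{p-1}\zeta_p^{a^2}$ (the quadratic Gauss sum in classical form; this identification uses that each nonzero square is hit exactly twice and $\eta'(0)=0$ so the ``$+1$'' from $a=0$ accounts for the difference). Then I would invoke the standard evaluation $\sum_{a=0}^{p-1}\zeta_p^{a^2} = \sqrt{p}$ if $p\equiv 1\pmod 4$ and $=\sqrt{-1}\sqrt{p}$ if $p\equiv 3\pmod 4$, which can be proved either by Gauss's telescoping-product method, by contour integration / Poisson summation on the theta function, or by computing the eigenvalues of the discrete Fourier transform matrix and a trace argument to fix the multiplicity. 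Finally I would reconcile the two cases into the single uniform formula: since $((p-1)/2)^2$ is even when $p\equiv 1\pmod 4$ and odd when $p\equiv 3\pmod 4$, we have $\sqrt{(-1)^{((p-1)/2)^2}} = 1$ or $\sqrt{-1}$ accordingly, matching both cases, and $(-1)^{((p-1)/2)^2}p = (-1)^{(p-1)/2}p = p^*$ since $((p-1)/2)^2 \equiv (p-1)/2 \pmod 2$.

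\medskip

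In practice, for a paper of this type I expect the ``proof'' to simply be a citation to \cite{LN97} (Lidl–Niederreiter, Theorem~5.15), since this is a textbook result; the sketch above is what one would write if a self-contained argument were wanted. The only subtle point worth flagging is the bookkeeping in the $\sqrt{(-1)^{((p-1)/2)^2}}$ notation: one must be careful that this denotes the principal square root ($1$ or $\sqrt{-1} = e^{\pi\sqrt{-1}/2}$, not $-\sqrt{-1}$), so that the stated identity is an equality of complex numbers and not merely up to sign — and this is exactly the content that the hard sign-determination step supplies.
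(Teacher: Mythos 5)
Your proposal is mathematically sound, and your guess about the paper's treatment is exactly right: the paper offers no proof of this lemma at all --- it is stated with the citation to \cite{LN97} (it is Theorem 5.15 of Lidl--Niederreiter), so there is nothing in the paper to compare your argument against step by step. Your two-stage outline is the standard one and each step checks out: the computation $G\overline{G}=p$ via $u=vw$ and orthogonality is correct, as is the consequence $G^2=\eta'(-1)p=p^*$ (using $\overline{G}=\eta'(-1)G$); the identification $G(\eta',\chi'_1)=\sum_{a=0}^{p-1}\zeta_p^{a^2}$ via the count $1+\eta'(t)$ of square roots of $t$ is correct; and the parity bookkeeping $((p-1)/2)^2\equiv(p-1)/2\pmod 2$ reconciling the two congruence classes of $p$ mod $4$ with the single formula $\sqrt{p^*}$ is correct. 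The one step you do not actually carry out is the sign determination itself --- you correctly isolate it as the hard part and defer to Gauss's product argument, Poisson summation, or the DFT eigenvalue method, any of which works --- so your writeup is a proof sketch rather than a complete self-contained proof; but since the paper itself treats this as a black-box citation, that level of detail is more than the paper supplies, and nothing in your outline would fail if the deferred step were expanded.
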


To determine the weight distribution  of the code ${\overline{{\mathbb{C}}^{\bot}}}^{\bot}$ defined in Eq.(\ref{code-1}), we introduce the following function
\begin{equation}\label{S(a.b.c)}
S(a,b,c)=\sum\limits_{x \in \mathbb{F}_q}\zeta_p^{Tr_s(ax^{p^s+1})+Tr_m( bx^{p^l+1}+cx)},\qquad a\in \mathbb{F}_{p^s}, b, c \in \mathbb{F}_q.
\end{equation}

For the the value distribution of $S(a,b,c)$, we introduce the earlier results  presented by Luo {\em et al.} \cite{LTW09} in the following.
\begin{lemma}\label{value distribution} \cite{LTW09}
For $m \geq 4$, gcd$(s+l, 2l)=d'$, gcd$(s, l)=d$, $\varepsilon=\pm1$ and $j\in \mathbb{F}_p^*,$ the value distribution of the $\{S(a,b,c) : a\in \mathbb{F}_{p^s}, b, c \in \mathbb{F}_q\}$ is given in Table \ref{1} when $d'=d$ is odd, in Table \ref{2} when $d'=d$ is even and in Table \ref{3} when $d'=2d$, respectively (see Tables~\ref{1}, ~\ref{2} and~\ref{3} in Appendix I).
\end{lemma}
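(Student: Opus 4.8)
The plan is to regard $S(a,b,c)$ as a \emph{twisted quadratic Gauss sum} over $\mathbb{F}_q$, viewed as the $\mathbb{F}_p$-space $\mathbb{F}_p^m$, and to reduce its evaluation to the rank/type distribution of an underlying family of quadratic forms together with the effect of a linear perturbation. Since $p$ is odd and $m=2s$, for $w\in\mathbb{F}_{p^s}$ one has $Tr_m(w)=2\,Tr_s(w)$, and $x^{p^s+1}\in\mathbb{F}_{p^s}$ is the norm of $x$ down to $\mathbb{F}_{p^s}$; hence
$$S(a,b,c)=\sum_{x\in\mathbb{F}_q}\zeta_p^{\,Q_{a,b}(x)+Tr_m(cx)},\qquad Q_{a,b}(x):=Tr_m\!\Bigl(\tfrac{a}{2}x^{p^s+1}+bx^{p^l+1}\Bigr),$$
where $Q_{a,b}$ is an $\mathbb{F}_p$-quadratic form on $\mathbb{F}_q$. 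A short computation using the Frobenius and the invariance of $Tr_m$ under field automorphisms shows that the associated symmetric bilinear form is $B_{a,b}(x,y)=Tr_m\bigl(y\,\ell_{a,b}(x)\bigr)$ with $\ell_{a,b}(x)=a x^{p^s}+b x^{p^l}+b^{p^{m-l}}x^{p^{m-l}}$ a $p$-linearized polynomial; raising to the $p^l$-th power, the $\mathbb{F}_q$-roots of $\ell_{a,b}$ coincide with those of $bx+a^{p^l}x^{p^{s+l}}+b^{p^l}x^{p^{2l}}$. Because the trace form is nondegenerate and $p$ is odd, $\mathrm{rad}(Q_{a,b})=\ker\ell_{a,b}$, so $Q_{a,b}$ has rank $r_{a,b}=m-\dim_{\mathbb{F}_p}\ker\ell_{a,b}$.

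Next I would invoke the standard evaluation of a twisted quadratic Gauss sum over $\mathbb{F}_p^m$: if $Q$ has rank $r$ with radical $R$ and $L$ is a linear functional, then $\sum_x\zeta_p^{Q(x)+L(x)}=0$ when $L|_R\not\equiv0$, and otherwise, after completing the square against $L$, it equals $\eta'(\Delta_Q)\,p^{\,m-r}\,(\sqrt{p^*})^{\,r}\,\zeta_p^{\kappa}$ for a suitable ``type'' $\eta'(\Delta_Q)\in\{\pm1\}$ and phase $\kappa\in\mathbb{F}_p$, where the value $\sqrt{p^*}$ of the quadratic Gauss sum is Lemma~\ref{Gauss}. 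Consequently $S(a,b,c)$ is pinned down once we know, for each $(a,b)$: (i) the rank $r_{a,b}$; (ii) the type of $Q_{a,b}$; and (iii) as $c$ runs over $\mathbb{F}_q$, the proportion of $c$ for which $x\mapsto Tr_m(cx)$ vanishes on $\mathrm{rad}(Q_{a,b})$ (exactly $p^{\,r_{a,b}-m}$ of them are ``compatible'') together with the equidistribution over $\mathbb{F}_p$ of the resulting phase $\kappa$ among the compatible $c$'s — this last point is what produces the refinement into $\varepsilon=\pm1$ and $j\in\mathbb{F}_p^\ast$ appearing in Tables~\ref{1}--\ref{3}.

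The crux, and the step I expect to be the main obstacle, is the \emph{rank and type distribution} of the family $\{Q_{a,b}\}$: which values $r_{a,b}$ occur, how many pairs $(a,b)$ attain each, and the corresponding count by type. This reduces to counting $\mathbb{F}_q$-roots of the linearized polynomial $bx+a^{p^l}x^{p^{s+l}}+b^{p^l}x^{p^{2l}}$, and it is here that the arithmetic of $d=\gcd(s,l)$ and $d'=\gcd(s+l,2l)$ governs everything: these gcd's control the $\mathbb{F}_p$-dimensions of the relevant solution spaces (intersections of $\mathbb{F}_{p^{2l}}$- and $\mathbb{F}_{p^{s+l}}$-type kernels inside $\mathbb{F}_q$), and whether $d'=d$ is odd, $d'=d$ is even, or $d'=2d$ changes the parities of the admissible ranks and the possible types — precisely the trichotomy of Tables~\ref{1}, \ref{2}, \ref{3}. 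Concretely I would (1) treat the degenerate slices $b=0$ (where $Q_{a,0}(x)=Tr_s(ax^{p^s+1})$ is a norm-type form of full rank $m$, with a classical type) and $a=0$ separately; (2) for $ab\neq0$, reduce the three-term linearized equation to a solvable normal form over a subfield and express $\dim_{\mathbb{F}_p}\ker\ell_{a,b}$ in terms of $d,d'$; (3) use character sums / quadratic-form counting over $\mathbb{F}_{p^d}$ or $\mathbb{F}_{p^{d'}}$ to count the $(a,b)$ in each (rank, type) class; and (4) fold in the $c$-distribution from the previous paragraph. Once these multiplicities are in hand, assembling Tables~\ref{1}, \ref{2} and \ref{3} is bookkeeping across the three $(d,d')$ regimes, collecting the $\varepsilon=\pm1$ and $j\in\mathbb{F}_p^\ast$ cases.
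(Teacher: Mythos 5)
You should first note that the paper contains no proof of Lemma~\ref{value distribution}: it is quoted verbatim from \cite{LTW09}, so the only thing to compare your proposal against is that reference. Your strategy is in fact the one used there. Writing $S(a,b,c)$ as a twisted quadratic Gauss sum attached to $Q_{a,b}(x)=Tr_m(\tfrac{a}{2}x^{p^s+1}+bx^{p^l+1})$, identifying the radical with the kernel of the linearized polynomial $\ell_{a,b}(x)=ax^{p^s}+bx^{p^l}+b^{p^{m-l}}x^{p^{m-l}}$ (equivalently of $bx+a^{p^l}x^{p^{s+l}}+b^{p^l}x^{p^{2l}}$), and then handling the linear term $Tr_m(cx)$ by the compatibility-with-the-radical condition and a phase $\kappa$ --- all of this is correct and is exactly the skeleton of the cited proof.

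The genuine gap is that the entire content of Tables~\ref{1}--\ref{3} is in the multiplicities $M_i$, $M_{i,\varepsilon}$, and your plan stops precisely where that work begins. Knowing that $S(a,b,c)\in\{0\}\cup\{\pm\eta'(\Delta)p^{m-r/2}(\sqrt{p^*})^{\,\epsilon}\zeta_p^{\kappa}\}$ for the admissible ranks $r$ tells you the \emph{shape} of the value set, not its \emph{distribution}. Determining which ranks ($m$, $m-d$ or $m-d'$, $m-2d$, \dots) actually occur for $\ell_{a,b}$, and counting the pairs $(a,b)$ in each rank class, is not ``bookkeeping across the three regimes'': in \cite{LTW09} it requires evaluating the first three power moments $\sum_{a,b}S(a,b,0)^k$ (equivalently, counting solutions of coupled equations such as $x^{p^s+1}=y^{p^s+1}$ and $x^{p^l+1}=y^{p^l+1}$ over $\mathbb{F}_q$), a separate argument to split each rank class by type $\eta'(\Delta_{Q_{a,b}})=\pm1$ (this is where the parity of $d'$ and the dichotomy $d'=d$ versus $d'=2d$ actually enter), and a further equidistribution argument for the phase $j\in\mathbb{F}_p$ over the compatible $c$. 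None of these computations is supplied or even set up in the proposal, and the numerical entries of the three tables cannot be recovered without them; so as written the argument does not prove the lemma. A secondary caution: the $\mathbb{F}_p$-dimension of the root space of $bx+a^{p^l}x^{p^{s+l}}+b^{p^l}x^{p^{2l}}$ inside $\mathbb{F}_q$ is governed by $\gcd$'s of $s+l$, $2l$ \emph{and} $m=2s$, not by $d'=\gcd(s+l,2l)$ alone, so the reduction ``to a solvable normal form over a subfield'' in your step (2) needs more care than the sketch suggests.
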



\subsection{Cyclotomic fields}
We  state the following basic facts on Galois group of cyclotomic fields $\mathbb{Q}(\zeta_p)$ since  $S(a,b,c)$ is element in $\mathbb{Q}(\zeta_p)$.

\begin{lemma}\label{Cyclotomic fields} \cite{IR90}
  Let $\mathbb{Z}$ be the rational integer ring and $\mathbb{Q}$ be the rational field.

$(1)$ The ring of integers in $K=\mathbb{Q}(\zeta_p)$ is $\mathcal{O}_k=\mathbb{Z}[\zeta_p]$ and $\{\zeta^i_{p} : 1 \leq i \leq p-1\}$ is an integral basis of $\mathcal{O}_k.$

$(2)$ The filed extension $K/\mathbb{Q}$ is Galois of degree $p-1$ and the Galois group $Gal(K/\mathbb{Q})=\{\sigma_y : y \in (\mathbb{Z}/p\mathbb{Z})^*\},$ where the automorphism $\sigma_y$ of $K$ is defined by $\sigma_y(\zeta_p)=\zeta^y_{p}.$

$(3)$ $K$ has a unique quadratic subfield $L=\mathbb{Q}(\sqrt{p^*}).$ For $1 \leq y \leq p-1, \sigma_y(\sqrt{p^*})
=\eta'(y)\sqrt{p^*}.$ Therefore, the Galois group $Gal(L/\mathbb{Q})$ is $\{1,\sigma_\gamma\},$ where $\gamma$ is any quadratic nonresidue in $\mathbb{F}_p.$
\end{lemma}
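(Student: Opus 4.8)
The plan is to derive the three parts from the classical structure theory of the cyclotomic field $K=\mathbb{Q}(\zeta_p)$, the two essential inputs being the irreducibility of the $p$-th cyclotomic polynomial over $\mathbb{Q}$ and the total ramification of $p$ in $K$. For part~(2), I would first note that $K$ is the splitting field over $\mathbb{Q}$ of $x^p-1$, since every $p$-th root of unity is a power of $\zeta_p$; hence $K/\mathbb{Q}$ is Galois. The minimal polynomial of $\zeta_p$ is $\Phi_p(x)=(x^p-1)/(x-1)=1+x+\cdots+x^{p-1}$, which is irreducible by Eisenstein's criterion at $p$ applied to $\Phi_p(x+1)$, so $[K:\mathbb{Q}]=p-1$. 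Any $\sigma\in Gal(K/\mathbb{Q})$ must send $\zeta_p$ to another root of $\Phi_p$, i.e. to $\zeta_p^y$ for some $y\in(\mathbb{Z}/p\mathbb{Z})^*$, and $\sigma\mapsto y$ is an injective group homomorphism $Gal(K/\mathbb{Q})\hookrightarrow(\mathbb{Z}/p\mathbb{Z})^*$; comparing orders (both equal $p-1$) shows it is an isomorphism, which is exactly the asserted description with $\sigma_y(\zeta_p)=\zeta_p^y$.

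For part~(1), since the powers $\zeta_p^i$ are algebraic integers we have $\mathbb{Z}[\zeta_p]\subseteq\mathcal{O}_K$. I would compute $\operatorname{disc}(1,\zeta_p,\ldots,\zeta_p^{p-2})=\pm N_{K/\mathbb{Q}}(\Phi_p'(\zeta_p))=\pm p^{p-2}$, using $\Phi_p'(\zeta_p)=-p\,\zeta_p^{-1}(1-\zeta_p)^{-1}$ and $N_{K/\mathbb{Q}}(1-\zeta_p)=\Phi_p(1)=p$. Since $[\mathcal{O}_K:\mathbb{Z}[\zeta_p]]^2$ divides this discriminant, the index is a power of $p$. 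To exclude a factor of $p$, I would use the factorization $p=\prod_{y=1}^{p-1}(1-\zeta_p^y)=u\,(1-\zeta_p)^{p-1}$ with $u$ a unit of $\mathbb{Z}[\zeta_p]$ — so $p$ is totally ramified, $(1-\zeta_p)$ generates the unique prime above it, and $\mathbb{Z}[\zeta_p]/(1-\zeta_p)\cong\mathbb{F}_p$ — and then a standard local argument at $p$ (equivalently, that $\mathbb{Q}_p(\zeta_p)/\mathbb{Q}_p$ is totally ramified with uniformizer $1-\zeta_p$ and ring of integers $\mathbb{Z}_p[\zeta_p]$) forces $p\nmid[\mathcal{O}_K:\mathbb{Z}[\zeta_p]]$. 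Hence $\mathcal{O}_K=\mathbb{Z}[\zeta_p]$; and since $\{1,\zeta_p,\ldots,\zeta_p^{p-2}\}$ is then an integral basis, the unimodular change of basis coming from $1+\zeta_p+\cdots+\zeta_p^{p-1}=0$ shows $\{\zeta_p^i:1\le i\le p-1\}$ is an integral basis too.

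For part~(3), the group $(\mathbb{Z}/p\mathbb{Z})^*$ is cyclic of order $p-1$, hence has a unique subgroup $Q$ of index~$2$, namely the quadratic residues; by the Galois correspondence $K$ therefore has a unique quadratic subfield $L=K^{Q}$. To identify $L$, I would use the quadratic Gauss sum $g=\sum_{a\in\mathbb{F}_p}\eta'(a)\zeta_p^a\in K$ together with the elementary evaluation $g^2=(-1)^{(p-1)/2}p=p^*$ (only $g^2$ is needed, not the deeper determination of the sign of $g$ itself), so $\sqrt{p^*}\in K$ and $\mathbb{Q}(\sqrt{p^*})$ is a quadratic subfield, whence $L=\mathbb{Q}(\sqrt{p^*})$ by uniqueness. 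For the Galois action, reindexing the sum gives $\sigma_y(g)=\sum_{a}\eta'(a)\zeta_p^{ya}=\eta'(y^{-1})\,g=\eta'(y)\,g$ (since $\eta'$ is $\pm1$-valued), i.e. $\sigma_y(\sqrt{p^*})=\eta'(y)\sqrt{p^*}$ for $1\le y\le p-1$; consequently $\sigma_y$ fixes $L$ iff $y$ is a quadratic residue, and $Gal(L/\mathbb{Q})=\{1,\sigma_\gamma\}$ for any quadratic nonresidue $\gamma\in\mathbb{F}_p$.

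The main obstacle is the single genuinely nontrivial step, inside part~(1): ruling out a factor of $p$ in the index $[\mathcal{O}_K:\mathbb{Z}[\zeta_p]]$, i.e. verifying that no hidden algebraic integer is introduced at the ramified prime. Everything else reduces to bookkeeping with roots of unity, the cyclicity of $(\mathbb{Z}/p\mathbb{Z})^*$, and the Galois correspondence; in particular part~(3) is painless because it needs only $g^2=p^*$ and not Gauss's theorem on the sign of $g$.
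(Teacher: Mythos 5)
Your proposal is correct, but note that the paper does not prove this lemma at all --- it is quoted verbatim from the cited reference [IR90] (Ireland--Rosen), and your argument is essentially the standard textbook proof found there: Eisenstein on $\Phi_p(x+1)$ for part (2), the discriminant bound $\pm p^{p-2}$ plus total ramification at $1-\zeta_p$ to pin down $\mathcal{O}_K=\mathbb{Z}[\zeta_p]$ for part (1), and the quadratic Gauss sum with $g^2=p^*$ for part (3). All the individual computations check out ($N_{K/\mathbb{Q}}(1-\zeta_p)=\Phi_p(1)=p$, the unimodular change of basis via $1+\zeta_p+\cdots+\zeta_p^{p-1}=0$, and $\sigma_y(g)=\eta'(y)g$ by reindexing), and your observation that only $g^2=p^*$ is needed --- not Gauss's determination of the sign of $g$ --- is exactly right and is what makes part (3) elementary.
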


\section{Weight distributions of the linear codes and their 2-designs}\label{theorem}
In this section, we will describe  the  weight distribution  of the code ${\overline{{\mathbb{C}}^{\bot}}}^{\bot}$ presented in Eq.(\ref{code-1})  and then obtain infinite families of $2$-designs from the weight distribution of ${\overline{{\mathbb{C}}^{\bot}}}^{\bot}$. We first  present the weight distribution of the code ${\overline{{\mathbb{C}}^{\bot}}}^{\bot}$ in the following theorem.

\begin{theorem}\label{weight1}
For $m\geq 4$, the weight distribution of the codes ${\overline{{\mathbb{C}}^{\bot}}}^{\bot}$ over $\mathbb{F}_p$ with length $p^m$ and $dim({\overline{{\mathbb{C}}^{\bot}}}^{\bot})=\frac{5m}{2}+1$ is given in Table \ref{4} when $d'=d$ is odd, in Table \ref{5} when $d'=d$ is even and in Table \ref{6} when $d'=2d$, respectively.
\begin{table}
\begin{center}
\caption{The weight distribution of ${\overline{{\mathbb{C}}^{\bot}}}^{\bot}$ when $d'=d$ is odd}\label{4}
\begin{tabular}{ll}
\hline\noalign{\smallskip}
Weight  &  Multiplicity   \\
\noalign{\smallskip}
\hline\noalign{\smallskip}
$0$  &  $1$ \\
$(p-1)(p^{m-1}-p^{s-1})$ & $ \frac{1}{2}p^{m+d}(p^s+1)(p^m-1)/(p^d+1)  $     \\
$ p^{m-1}(p-1)+p^{s-1}$  &  $ \frac{1}{2}p^{m+d}(p-1)(p^s+1)(p^m-1)/(p^d+1)$     \\
$(p-1)(p^{m-1}+p^{s-1})$  &  $ \frac{p^{m+d}(p^m-2p^{m-d}+1)(p^s-1)}{2(p^d-1)}$     \\
$ p^{m-1}(p-1)-p^{s-1}$  &  $ \frac{p^{m+d}(p-1)(p^m-2p^{m-d}+1)(p^s-1)}{2(p^d-1)}$     \\
$ p^{m-1}(p-1)\pm (-1)^{\frac{p-1}{2}}p^{s+\frac{d-1}{2}}$  &  $\frac{1}{2}p^{3s-2d}(p-1)(p^m-1)$     \\
$ p^{s+d-1}(p-1)(p^{s-d}+1)$  &  $ p^{m-2d}(p^{s-d}-1)(p^m-1)/(p^{2d}-1)$     \\
$ p^{m-1}(p-1)-p^{s+d-1}$  &  $ \frac{p^{m-2d}(p-1)(p^{s-d}-1)(p^m-1)}{(p^{2d}-1)}$     \\
$p^{m-1}(p-1)$  &  $ p(p^{3s-d}-p^{3s-2d}+p^{3s-2d-1}+p^{3s-3d}$\\ &$-p^{m-2d}+1)(p^m-1) $    \\
$ p^m$  &  $ p-1$     \\
\noalign{\smallskip}
\hline
\end{tabular}
\end{center}
\end{table}

\begin{table}
\begin{center}
\caption{The weight distribution of ${\overline{{\mathbb{C}}^{\bot}}}^{\bot}$ when $d'=d$ is even}\label{5}
\begin{tabular}{ll}
\hline\noalign{\smallskip}
Weight  &  Multiplicity   \\
\noalign{\smallskip}
\hline\noalign{\smallskip}
$0$  &  $1$ \\
$p^{s-1}(p-1)(p^s-1) $ & $ \frac{1}{2}p^{m+d}(p^s+1)(p^m-1)/(p^d+1)  $     \\
$ p^{s-1}(p^{s+1}-p^s+1)$  &  $ \frac{1}{2}p^{m+d}(p-1)(p^s+1)(p^m-1)/(p^d+1)$     \\
$p^{s-1}(p-1)(p^s+1)$  &  $ \frac{p^{s+d-1}(p^m-2p^{m-d}+1)(p^s+1)(p^{s+1}-2p+2)}{2(p^d-1)}$     \\
$  p^{s-1}(p^{s+1}-p^s-1)$  &  $ \frac{p^{s+d-1}(p-1)(p^m-2p^{m-d}+1)(p^s+1)(p^{s+1}-2p+2)}{2(p^d-1)}$    \\
$ p^{s+\frac{d}{2}-1}(p-1)(p^{s-\frac{d}{2}} \pm 1)$  &  $\frac{1}{2}p^{3s-2d}(p^m-1)$     \\
$p^{s+\frac{d}{2}-1}(p^{s-\frac{d}{2}+1}-p^{s-\frac{d}{2}}\pm 1)$  &  $\frac{1}{2}p^{3s-2d}(p-1)(p^m-1)$     \\
$p^{s+d-1}(p-1)(p^{s-d}+1)$  &  $p^{m-2d}(p^{s-d}-1)(p^m-1)/(p^{2d}-1)$     \\
$ p^{s+d-1}(p^{s-d+1}-p^{s-d}-1)$  &  $ \frac{p^{m-2d}(p-1)(p^{s-d}-1)(p^m-1)}{(p^{2d}-1)}$     \\
$p^{m-1}(p-1)$  &  $ p(p^{3s-d}-p^{3s-2d}+p^{3s-3d}-p^{m-2d}$\\ &$+1)(p^m-1) $    \\
$ p^m$  &  $ p-1$     \\
\noalign{\smallskip}
\hline
\end{tabular}
\end{center}
\end{table}

\begin{table}
\begin{center}
\caption{The weight distribution of ${\overline{{\mathbb{C}}^{\bot}}}^{\bot}$ when $d'=2d$}\label{6}
\begin{tabular}{ll}
\hline\noalign{\smallskip}
Weight  &  Multiplicity   \\
\noalign{\smallskip}
\hline\noalign{\smallskip}
$0$  &  $1$ \\
$p^{s-1}(p-1)(p^s+1) $ & $\frac{p^{m+3d}(p^m-p^{m-2d}-p^{m-3d}+p^s-p^{s-d}+1)(p^s-1)}{(p^d+1)(p^{2d}-1)}  $     \\
$ p^{s-1}(p^{s+1}-p^s-1)$  &  $ \frac{p^{m+3d}(p-1)(p^m-p^{m-2d}-p^{m-3d}+p^s-p^{s-d}+1)(p^s-1)}{(p^d+1)(p^{2d}-1)} $     \\
$p^{s+d-1}(p-1)(p^{s-d}-1)$  &  $ \frac{p^{m-d}(p^s+p^{s-d}+p^{s-2d}+1)(p^m-1)}{(p^d+1)^2}$     \\
$  p^{s+d-1}(p^{s-d+1}-p^{s-d}+1)$  &  $\frac{p^{m-d}(p-1)(p^s+p^{s-d}+p^{s-2d}+1)(p^m-1)}{(p^d+1)^2}$    \\
$p^{s+2d-1}(p-1)(p^{s-2d}+1)$  &  $\frac{p^{m-4d}(p^{s-d}-1)(p^m-1)}{(p^d+1)(p^{2d}-1)}$     \\
$p^{s+2d-1}(p^{s-2d+1}-p^{s-2d}-1)$  &  $\frac{p^{m-4d}(p-1)(p^{s-d}-1)(p^m-1)}{(p^d+1)(p^{2d}-1)}$     \\
$p^{m-1}(p-1)$  &  $ p(p^{3s-d}-p^{3s-2d}+p^{3s-3d}-p^{3s-4d}$\\ &$+p^{3s-5d}+p^{m-d}-2p^{m-2d}+p^{m-3d}$\\ &$-p^{m-4d}+1)(p^m-1) $    \\
$ p^m$  &  $ p-1$     \\
\noalign{\smallskip}
\hline
\end{tabular}
\end{center}
\end{table}
\end{theorem}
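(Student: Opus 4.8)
\textbf{Proof proposal for Theorem \ref{weight1}.}

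The plan is to express the Hamming weight of an arbitrary codeword of ${\overline{{\mathbb{C}}^{\bot}}}^{\bot}$ in terms of the exponential sum $S(a,b,c)$ from Eq.~(\ref{S(a.b.c)}) and then read off the weight distribution from Lemma \ref{value distribution}. First I would fix a codeword
$$\mathbf{c}(a,b,c,h)=\bigl(Tr_s(ax^{p^s+1})+Tr_m(bx^{p^l+1}+cx)+h\bigr)_{x\in\mathbb{F}_q},$$
indexed by $x\in\mathbb{F}_q$ (so the length is $p^m$), and compute its weight via the standard character-sum identity
$$\mathrm{wt}(\mathbf{c}(a,b,c,h))=p^m-\frac{1}{p}\sum_{y\in\mathbb{F}_p}\sum_{x\in\mathbb{F}_q}\zeta_p^{y(Tr_s(ax^{p^s+1})+Tr_m(bx^{p^l+1}+cx)+h)}.$$
The $y=0$ term contributes $p^m$. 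For $y\neq0$, using $y\,Tr_s(ax^{p^s+1})=Tr_s((ya)x^{p^s+1})$ and $y\,Tr_m(bx^{p^l+1}+cx)=Tr_m((yb)x^{p^l+1}+(yc)x)$, the inner sum is exactly $\zeta_p^{yh}S(ya,yb,yc)$. Hence
$$\mathrm{wt}(\mathbf{c}(a,b,c,h))=p^m-p^{m-1}-\frac{1}{p}\sum_{y\in\mathbb{F}_p^*}\zeta_p^{yh}S(ya,yb,yc).$$

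The next step is a case analysis. When $(a,b,c)=(0,0,0)$: the codeword is the constant word $h$, giving weight $0$ (for $h=0$) or $p^m$ (for $h\neq0$), which accounts for the rows of weight $0$ with multiplicity $1$ and weight $p^m$ with multiplicity $p-1$ in each of Tables \ref{4}, \ref{5}, \ref{6}. When $(a,b,c)\neq(0,0,0)$ but $h=0$: the weight is $p^m-p^{m-1}-\frac1p\sum_{y\in\mathbb{F}_p^*}S(ya,yb,yc)$, and I would need the value distribution of this ``symmetrized'' sum over $y$. When $h\neq0$, the factor $\zeta_p^{yh}$ twists the sum, and this is where Lemma \ref{Gauss} and Lemma \ref{Cyclotomic fields} enter: by the results of Luo \emph{et al.}, each $S(a,b,c)$ is either a rational integer of the form $\pm p^{\,m/2+e}$ (type I values) or of the form $\pm(\sqrt{p^*})\,p^{\,e}$ (type II values, which are $\pm\sqrt{p^*}$ times a power of $p$). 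For type I values $S$ is fixed by all of $Gal(\mathbb{Q}(\zeta_p)/\mathbb{Q})$, so $S(ya,yb,yc)=S(a,b,c)$ for every $y\in\mathbb{F}_p^*$ and $\sum_{y\ne0}\zeta_p^{yh}S(ya,yb,yc)=-S(a,b,c)$ when $h\ne 0$ (using $\sum_{y\ne0}\zeta_p^{yh}=-1$) versus $(p-1)S(a,b,c)$ when $h=0$. For type II values, $\sigma_y(\sqrt{p^*})=\eta'(y)\sqrt{p^*}$, so $S(ya,yb,yc)=\eta'(y)S(a,b,c)$, and then $\sum_{y\ne0}\zeta_p^{yh}\eta'(y)=\eta'(h)G(\eta',\chi_1')=\eta'(h)\sqrt{p^*}$ when $h\ne0$ (and $0$ when $h=0$). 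Substituting these into the weight formula produces, in each case, a weight of the shape $p^{m-1}(p-1)$ plus or minus an explicit power of $p$, matching every row of the tables; the $\pm$ rows with the factor $(-1)^{(p-1)/2}$ come precisely from the type II contributions after writing $(\sqrt{p^*})^2=p^*=(-1)^{(p-1)/2}p$.

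Finally, I would tally multiplicities. For each weight value $w$, its multiplicity in ${\overline{{\mathbb{C}}^{\bot}}}^{\bot}$ is obtained by counting the triples $(a,b,c)$ whose associated $S$-value (together with the quadratic-residue status of $h$, when that matters) yields weight $w$, multiplied by the appropriate number of admissible $h$: a factor $1$ for $h=0$, and a split into $\frac{p-1}{2}$ values of $h$ with $\eta'(h)=1$ and $\frac{p-1}{2}$ with $\eta'(h)=-1$ for the type II contributions. These counts of triples are exactly the entries of Tables \ref{1}, \ref{2}, \ref{3} from Lemma \ref{value distribution} (scaled suitably, since $a$ ranges over $\mathbb{F}_{p^s}$ while $b,c$ range over $\mathbb{F}_q$); bookkeeping these against the three cases $d'=d$ odd, $d'=d$ even, $d'=2d$ gives Tables \ref{4}, \ref{5}, \ref{6} respectively. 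I would also verify the dimension claim $\dim=\frac{5m}{2}+1$ by checking the parameter space $\{(a,b,c,h): a\in\mathbb{F}_{p^s},\,b,c\in\mathbb{F}_q,\,h\in\mathbb{F}_p\}$ has size $p^{s}\cdot p^m\cdot p^m\cdot p=p^{5m/2+1}$ and that the map $(a,b,c,h)\mapsto\mathbf{c}(a,b,c,h)$ is injective, the latter following because the minimal polynomials $h_1,h_{p^s+1},h_{p^l+1}$ are pairwise distinct (so the corresponding trace functions are linearly independent), and the constant term $h$ is independent of the non-constant part. The main obstacle I anticipate is the last step: correctly matching the value-distribution tables of Luo \emph{et al.} (which count triples $(a,b,c)$) with the weight table while keeping track of the extra freedom in $h$ and the quadratic-character split, and checking that the resulting multiplicities sum to $p^{5m/2+1}$ — a consistency check I would carry out explicitly in each of the three cases.
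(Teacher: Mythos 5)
Your overall strategy is exactly the paper's: write $w_H(\mathbf{c}(a,b,c,h))=p^m-p^{m-1}-\frac{1}{p}\sum_{y\in\mathbb{F}_p^*}\zeta_p^{yh}\sigma_y(S(a,b,c))$, evaluate the Galois twists via Lemma \ref{Cyclotomic fields} and Lemma \ref{Gauss}, and then read the multiplicities off the value distribution of Luo \emph{et al.} (Lemma \ref{value distribution}). However, there is a concrete gap in your case analysis: you classify the values of $S(a,b,c)$ into only two types, rational integers $\pm p^{m/2+e}$ and $\pm\sqrt{p^*}\,p^{e}$. This misreads Tables \ref{1}--\ref{3}, which also contain the values $\varepsilon\zeta_p^j p^s$, $-\zeta_p^j p^{s+d}$, $-\zeta_p^j p^{s+2d}$ and $\varepsilon\zeta_p^j\sqrt{p^*}p^{s+\frac{d-1}{2}}$ with $j\in\mathbb{F}_p^*$, i.e.\ values carrying a nontrivial $p$-th root of unity factor. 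These classes ($M_3$, $M_4$, $M_{6,\varepsilon}$, $M_8$, etc.) account for the majority of the triples $(a,b,c)$, and for them the relevant inner sum is $\sum_{y\ne 0}\zeta_p^{y(h+j)}$, which equals $p-1$ or $-1$ according to whether $h+j=0$ or not (and, for the $\zeta_p^j\sqrt{p^*}$ values, $\sum_{y\ne0}\eta'(y)\zeta_p^{y(h+j)}=\eta'(h+j)\sqrt{p^*}$, vanishing when $h+j=0$). So the correct splitting parameter is $h+j$, not merely the quadratic-residue status of $h$. Without these contributions your multiplicity tally cannot reproduce the tables: e.g.\ the multiplicity of weight $(p-1)(p^{m-1}-p^{s-1})$ in Table \ref{4} is $M_1+(p-1)M_3$, and the $(p-1)M_3$ term comes precisely from the pairs $(S,h)$ with $S=\zeta_p^jp^s$ and $h=-j$, which your two-type classification omits. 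This bookkeeping of the $\zeta_p^j$-twisted values is the main computational content of the paper's proof (its Table \ref{8}).

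A secondary, minor point: the identification $S(ya,yb,yc)=\sigma_y(S(a,b,c))$ that you use implicitly requires $y\,Tr_s(ax^{p^s+1})=Tr_s((ya)x^{p^s+1})$ with $ya$ still in $\mathbb{F}_{p^s}$, which holds since $y\in\mathbb{F}_p\subseteq\mathbb{F}_{p^s}$; this is fine, but it is cleaner (and is what the paper does) to work directly with $\sigma_y(S(a,b,c))$ so that Lemma \ref{value distribution} can be applied to the original triple $(a,b,c)$ without re-deriving a value distribution for the symmetrized sum. Once you repair the case analysis to include all value classes of Lemma \ref{value distribution} and split on $h+j$, the rest of your argument (the weight $0$ and $p^m$ rows from $(a,b,c)=(0,0,0)$, the Gauss-sum evaluation for the $\sqrt{p^*}$ rows, and the final consistency check that the multiplicities sum to $p^{5m/2+1}$) goes through and coincides with the paper's proof.
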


One can see that the code is at most ten-weight if $d'=d$ is odd, twelve-weight if $d'=d$ is even and eight-weight if $d'=2d$. 

\begin{theorem}\label{$2-$design-1}
Let $m\geq 4$ be a positive integer. Then the supports of the all codewords of weight $i>0$ in ${\overline{{\mathbb{C}}^{\bot}}}^{\bot}$ form a $2$-design, provided that $A_i\neq0.$
\end{theorem}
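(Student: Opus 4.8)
The plan is to show that the code $\overline{\mathbb{C}^\bot}$ is affine-invariant, and then invoke Theorem~\ref{2-design} directly. By Lemma~\ref{Kasami-Lin-Peterson}(2), the dual of an affine-invariant code is again affine-invariant, so it suffices to prove that one of the two codes $\overline{\mathbb{C}^\bot}$ or ${\overline{\mathbb{C}^\bot}}^\bot$ is affine-invariant; the natural candidate is $\overline{\mathbb{C}^\bot}$, since $\mathbb{C}^\bot$ is a primitive cyclic code of length $n=q-1$ whose defining set is easy to read off from the parity-check polynomial of $\mathbb{C}$. So the first step is to write down the defining set $T$ of $\mathbb{C}^\bot$ explicitly: since $\mathbb{C}$ has parity-check polynomial $h_1(x)h_{p^s+1}(x)h_{p^l+1}(x)$, the code $\mathbb{C}^\bot$ has generator polynomial built from the reciprocals of these minimal polynomials, and its defining set is the union of the $p$-cyclotomic cosets $C_{-1}$, $C_{-(p^s+1)}$, $C_{-(p^l+1)}$ modulo $q-1$ (equivalently $C_{n-1}, C_{n-p^s-1}, C_{n-p^l-1}$). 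Then the defining set $\overline{T}$ of the extended code $\overline{\mathbb{C}^\bot}$ is $T \cup \{0\}$, viewed as a subset of $\{0,1,\dots,p^m-1\}$.

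Next I would apply the Kasami--Lin--Peterson criterion, Lemma~\ref{Kasami-Lin-Peterson}(1): I must verify that whenever $s \in \overline{T}$ and $r \preceq s$ in the $p$-adic partial order, then $r \in \overline{T}$. Because $\overline{T}$ is a union of $p$-cyclotomic cosets together with $0$, and the partial order $\preceq$ is invariant under multiplication by $p$ (cyclic shift of $p$-adic digits), it is enough to check the condition for the coset leaders, i.e.\ for the three ``generating'' exponents $q-1-1$, $q-1-(p^s+1)$, $q-1-(p^l+1)$ and then close under the coset action. The cleanest route is to exhibit these exponents in terms of their $p$-adic (or $p^d$-adic / $p$-weight) structure: for instance $q - 2 = p^m - 2$ has $p$-adic expansion with all digits equal to $p-1$ except the lowest digit equal to $p-2$, and any $r \preceq p^m-2$ must itself be an exponent whose cyclotomic coset lies in $\overline{T}$; one argues similarly, and slightly more delicately, for $n - p^s - 1$ and $n - p^l - 1$. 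The key identity to use repeatedly is that the Hamming weight (digit sum) of an exponent controls whether its monomial trace term appears, and that $\preceq$ can only decrease this weight — so I would show $\overline{T}$ is exactly the set of exponents of $p$-weight at most some bound, adjusted by the three specific cosets, and that this set is downward closed.

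There is a subtlety worth flagging: the trace representation in Eq.~(\ref{code-1}) mixes $Tr_s$ on the subfield $\mathbb{F}_{p^s}$ with $Tr_m$ on $\mathbb{F}_q$, so the exponent $p^s+1$ contributes to the defining set not as a single coset but effectively through the $\mathbb{F}_{p^s}$-structure; I would handle this by noting that $Tr_s(ax^{p^s+1}) = Tr_m(a'x^{p^s+1})$ for a suitable $a' \in \mathbb{F}_{p^s}$ (using $Tr_m = Tr_s \circ Tr^m_s$ and $\mathbb{F}_{p^s}$-linearity), so the contributed exponents are still governed by the coset $C_{p^s+1}$ modulo $q-1$, and dually by $C_{-(p^s+1)}$ for $\mathbb{C}^\bot$. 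Once affine-invariance of $\overline{\mathbb{C}^\bot}$ is established, Lemma~\ref{Kasami-Lin-Peterson}(2) gives affine-invariance of ${\overline{\mathbb{C}^\bot}}^\bot$, and Theorem~\ref{2-design} immediately yields that for every $i$ with $A_i \neq 0$ the supports of the weight-$i$ codewords form a $2$-design, which is exactly the assertion of Theorem~\ref{$2-$design-1}.

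The main obstacle I anticipate is the bookkeeping in the downward-closure check for the two exponents $n-(p^s+1)$ and $n-(p^l+1)$: their $p$-adic digit patterns depend on the relative sizes of $s$, $l$, and $m=2s$, and on the gcd conditions $\gcd(s,l)=d$, $\gcd(s+l,2l)=d'$, so one may need to split into the same cases ($d'=d$ odd, $d'=d$ even, $d'=2d$) that appear in the weight-distribution tables, or — more cleanly — find a uniform argument showing the union of the three cosets plus $\{0\}$ is always $\preceq$-downward closed regardless of these parameters. I would aim for the uniform argument first, falling back to case analysis only if the digit patterns genuinely differ.
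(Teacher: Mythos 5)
Your overall strategy is exactly the paper's: establish that $\overline{\mathbb{C}^{\bot}}$ is affine-invariant via the Kasami--Lin--Peterson criterion, pass to the dual using Lemma~\ref{Kasami-Lin-Peterson}(2), and invoke Theorem~\ref{2-design}. However, the central verification as you have set it up would fail. You take the defining set of $\mathbb{C}^{\bot}$ to be $C_{-1}\cup C_{-(p^s+1)}\cup C_{-(p^l+1)}$ and then propose to check that $\overline{T}=C_{-1}\cup C_{-(p^s+1)}\cup C_{-(p^l+1)}\cup\{0\}$ is downward closed under $\preceq$. It is not: $-1\equiv p^m-2 \pmod{p^m-1}$ has $p$-adic digits $(p-2,p-1,\dots,p-1)$, so \emph{every} $r$ with $r_0\le p-2$ satisfies $r\preceq p^m-2$; in particular $1\preceq p^m-2$, while $1\notin\overline{T}$ (one checks $p^i\not\equiv -1$ and $p^i(p^s+1),p^i(p^l+1)\not\equiv -1 \pmod{p^m-1}$ by comparing digit sums). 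Since $\overline{T}$ has only $\frac{5m}{2}+1$ elements but the down-set of $p^m-2$ has $(p-1)p^{m-1}$ elements, no amount of bookkeeping on the digit patterns of $n-p^s-1$ and $n-p^l-1$, nor a case split on $d$ and $d'$, can rescue this; your guiding claim that ``any $r\preceq p^m-2$ must itself have its cyclotomic coset in $\overline{T}$'' is simply false.

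The missing idea is that the code one must test with Lemma~\ref{Kasami-Lin-Peterson}(1) is the dual of the code \emph{actually defined} in Eq.~(\ref{code-1}): writing a vector $v=(v_x)_{x\in\mathbb{F}_q}$ orthogonal to all codewords of Eq.~(\ref{code-1}) forces $\sum_x v_x x^{j}=0$ for $j\in\{0,1,p^l+1,p^s+1\}$, so that dual is the extended cyclic code with defining set $\overline{T}=\{0\}\cup C_1\cup C_{p^l+1}\cup C_{p^s+1}$ --- the exponents themselves, not their negatives. (This is what the paper means by $\overline{\mathbb{C}^{\bot}}$; the sign discrepancy with the literal reciprocal-polynomial dual only replaces the code by an equivalent one.) For this $\overline{T}$ the check is immediate and uniform in $d,d'$: every nonzero element of $\overline{T}$ has $p$-weight $1$ or $2$, $r\preceq u$ forces $wt(r)\le wt(u)$, the weight-$1$ exponents are exactly $C_1$, and the only $r\preceq p^i(p^j+1)$ of weight $2$ is that element itself. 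Your side remark reducing $Tr_s(ax^{p^s+1})$ to $Tr_m(a'x^{p^s+1})$ via transitivity of the trace is correct and is indeed why $p^s+1$ contributes the single coset $C_{p^s+1}$; with the defining set corrected as above, the rest of your argument (dualize, apply Theorem~\ref{2-design}) goes through as in the paper.
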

%
 We can deduce   the parameters of the 2-designs derived from ${\overline{{\mathbb{C}}^{\bot}}}^{\bot}$ in the following theorem.
\begin{theorem}\label{parameter-1}
Let $\mathcal{B}$ be the set of the supports of the codewords of ${\overline{{\mathbb{C}}^{\bot}}}^{\bot}$ with weight $i,$ where $A_i\neq 0.$ Then for $m\geq 4$, ${\overline{{\mathbb{C}}^{\bot}}}^{\bot}$ gives $2$-$(p^m, i, \lambda)$ designs for the following pairs:

(1) if $d'=d$ is odd,
\begin{itemize}
\item $(i, \lambda)=((p-1)(p^{m-1}-p^{s-1}), \frac{1}{2}p^{s+d-1}[(p-1)(p^{m-1}-p^{s-1})-1](p^m-1)/(p^d+1));$
\item $(i, \lambda)=( p^{m-1}(p-1)+p^{s-1}, \frac{1}{2}p^{s+d-1}[p^s(p-1)+1](p^m-p^{m-1}+p^{s-1}-1)(p^s+1)/(p^d+1));$
\item $(i, \lambda)=( (p-1)(p^{m-1}+p^{s-1}), \frac{1}{2}p^{s+d-1}[(p-1)(p^{m-1}+p^{s-1})-1](\\p^m-2p^{m-d}+1)/(p^d-1));$
\item $(i, \lambda)=( p^{m-1}(p-1)-p^{s-1}, p^{s+d-1}[p^s(p-1)-1](p^m-2p^{m-d}+1)(p^m-p^{m-1}-p^{s-1}-1)/2(p^d-1)(p^s+1));$
\item $(i, \lambda)=( p^{m-1}(p-1)\pm(-1)^{\frac{p-1}{2}}p^{s+\frac{d-1}{2}}, \frac{1}{2}p^{m-\frac{3d+1}{2}}[p^{s-\frac{d-1}{2}-1}(p-1)\pm(-1)^{\frac{p-1}{2}}][p^{m-1}(p-1)\pm(-1)^{\frac{p-1}{2}}p^{s+\frac{d-1}{2}}-1]);$   \item $(i, \lambda)=( p^{s+d-1}(p-1)(p^{s-d}+1),
    p^{s-d-1}[p^{s+d-1}(p-1)(p^{s-d}+1)-1](p^{s-d}+1)(p^{s-d}-1)/(p^{2d}-1)).$
\item $(i, \lambda)=(p^{m-1}(p-1)-p^{s+d-1},
      p^{s-d-1}[p^{s-d}(p-1)-1](p^m-p^{m-1}\\-p^{s+d-1}-1)(p^{s-d}-1)/(p^{2d}-1));$
\item $(i, \lambda)=( p^{m-1}(p-1), (p^m-p^{m-1}-1)(p^{3s-d}-p^{3s-2d}+p^{3s-2d-1}+p^{3s-3d}-p^{m-2d}+1));$

\end{itemize}

(2) if $d'=d$ is even,
\begin{itemize}
\item $(i, \lambda)=(p^{s-1}(p-1)(p^s-1), \frac{1}{2}p^{s+d-1}[p^{s-1}(p-1)(p^s-1)-1](p^m-1)/(p^d+1));$
\item $(i, \lambda)=( p^{s-1}(p^{s+1}-p^s+1), \frac{1}{2}p^{s+d-1}(p^{s+1}-p^s+1) [p^{s-1}(p^{s+1}-p^s+1)-1](p^s+1)/(p^d+1));$
\item $(i, \lambda)=(p^{s-1}(p-1)(p^s+1), p^{d-2}[p^{s-1}(p^s+1)(p-1)-1](p^m-2p^{m-d}+1)(p^s+1)(p^{s+1}-2p+2)/2(p^d-1)(p^s-1));$
\item $(i, \lambda)=(p^{s-1}(p^{s+1}-p^s-1), p^{d-2}(p^{s+1}-p^s-1)[p^{s-1}(p^{s+1}-p^s-1)-1](p^m-2p^{m-d}+1)(p^{s+1}-2p+2)/2(p^d-1)(p^s-1));$
\item $(i, \lambda)=(p^{s+\frac{d}{2}-1}(p-1)(p^{s-\frac{d}{2}}\pm1), \frac{1}{2}p^{m-\frac{3d}{2}-1}(p^{s-\frac{d}{2}}\pm1)[p^{s+\frac{d}{2}-1}(p-1)(p^{s-\frac{d}{2}}\pm1)-1]);$
\item $(i, \lambda)=(p^{s+\frac{d}{2}-1}(p^{s-\frac{d}{2}+1}-p^{s-\frac{d}{2}}\pm1), \frac{1}{2}p^{m-\frac{3d}{2}-1}(p^{s-\frac{d}{2}+1}-p^{s-\frac{d}{2}}\pm1)[p^{s+\frac{d}{2}-1}(p^{s-\frac{d}{2}+1}-p^{s-\frac{d}{2}}\pm1)-1]);$
\item $(i, \lambda)=(p^{s+d-1}(p-1)(p^{s-d}+1), p^{s-d-1}[p^{s+d-1}(p-1)(p^{s-d}+1)-1](p^{m-2d}-1)/(p^{2d}-1));$
\item $(i, \lambda)=( p^{s+d-1}(p^{s-d+1}-p^{s-d}-1), p^{s-d-1}(p^{s-d+1}-p^{s-d}-1)(p^{s-d}\\-1)[p^{s+d-1}(p^{s-d+1}-p^{s-d}-1)-1]/(p^{2d}-1));$
\item $(i, \lambda)=( p^{m-1}(p-1), (p^m-p^{m-1}-1)(p^{3s-d}-p^{3s-2d}+p^{3s-3d}-p^{m-2d}+1)).$
\end{itemize}

(3) if $d'=2d$,
\begin{itemize}
\item $(i, \lambda)=(p^{s-1}(p-1)(p^s+1), p^{s+3d-1}[p^{s-1}(p-1)(p^s+1)-1](p^m-p^{m-2d}-p^{m-3d}+p^s-p^{s-d}+1)/(p^d+1)(p^{2d}-1));$
\item $(i, \lambda)=( p^{s-1}(p^{s+1}-p^s-1), p^{s+3d-1}(p^{s+1}-p^s-1)[p^{s-1}(p^{s+1}-p^s-1)-1](p^m-p^{m-2d}-p^{m-3d}+p^s-p^{s-d}+1)/(p^d+1)(p^{2d}-1)(p^s+1));$
\item $(i, \lambda)=(p^{s+d-1}(p-1)(p^{s-d}-1), p^{s-1}(p^{s-d}-1)[p^{s+d-1}(p-1)(p^{s-d}-1)-1](p^s+p^{s-d}+p^{s-2d}+1)/(p^d+1)^2);$
\item $(i, \lambda)=( p^{s+d-1}(p^{s-d+1}-p^{s-d}+1),  p^{s-1}(p^{s-d+1}-p^{s-d}+1)[p^{s+d-1}\\(p^{s-d+1}-p^{s-d}+1)-1](p^s+p^{s-d}+p^{s-2d}+1)/(p^d+1)^2));$
\item $(i, \lambda)=(p^{s+2d-1}(p-1)(p^{s-2d}+1), p^{s-2d-1}(p^{s-d}-1)(p^{s-2d}+1)[\\p^{s+2d-1}(p-1)(p^{s-2d}+1)-1]/(p^d+1)(p^{2d}-1));$
\item $(i, \lambda)=(p^{s+2d-1}(p^{s-2d+1}-p^{s-2d}-1), p^{s-2d-1}(p^{s-2d+1}-p^{s-2d}-1)[p^{s+2d-1}(p^{s-2d+1}-p^{s-2d}-1)-1](p^{s-d}-1)/(p^d+1)(p^{2d}-1));$
\item $(i, \lambda)=( p^{m-1}(p-1), (p^m-p^{m-1}-1)(p^{3s-d}-p^{3s-2d}+p^{3s-3d}-p^{3s-4d}+p^{3s-5d}+p^{m-d}-2p^{m-2d}+p^{m-3d}-p^{m-4d}+1)).$
\end{itemize}
\end{theorem}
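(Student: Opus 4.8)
The plan is to combine the weight distribution of ${\overline{{\mathbb{C}}^{\bot}}}^{\bot}$ from Theorem~\ref{weight1} with Theorem~\ref{2-design} (which already guarantees that the supports of codewords of each nonzero weight form a $2$-design) and Theorem~\ref{design parameter} (which controls how many codewords share a common support), and then solve for $\lambda$ using the counting identity~(\ref{condition}). So the only genuine content here is the explicit determination of $\lambda$ for each admissible weight $i$; everything structural is inherited from the earlier results, once we check their hypotheses apply.

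First I would verify the hypotheses of Theorem~\ref{2-design}: I need that ${\overline{{\mathbb{C}}^{\bot}}}^{\bot}$ is an affine-invariant code of length $p^m$. By construction ${\mathbb{C}}$ is a cyclic code of length $q-1 = p^m-1$, so ${\mathbb{C}}^{\bot}$ is too, hence $\overline{{\mathbb{C}}^{\bot}}$ is an extended cyclic code of length $p^m$; by Lemma~\ref{Kasami-Lin-Peterson}(1) I would check that its defining set $\overline{T}$ is downward-closed under $\preceq$, using the explicit description of the defining set of ${\mathbb{C}}$ in terms of the $p$-cyclotomic cosets of $1$, $p^s+1$, $p^l+1$ (this is essentially the content of the paper's Section~\ref{section-4} preliminaries on affine invariance). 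Then Lemma~\ref{Kasami-Lin-Peterson}(2) gives that the dual ${\overline{{\mathbb{C}}^{\bot}}}^{\bot}$ is again affine-invariant, and Theorem~\ref{2-design} applies. This reproves Theorem~\ref{$2-$design-1}, which I will have already established (or simply cite) before attacking the parameters.

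Next, for a fixed nonzero weight $i$ appearing in Table~\ref{4}, \ref{5}, or \ref{6}, let $A_i$ be its multiplicity (read off from the tables) and let $b = |\mathcal{B}_i|$ be the number of distinct supports. The key reduction is Theorem~\ref{design parameter}: I first compute the bound $w$ (the largest $w \le p^m$ with $w - \lfloor (w+p-2)/(p-1)\rfloor < \delta$, where $\delta = (p-1)(p^{m-1}-p^{s-1})$ or the analogous minimum weight in the even/$d'=2d$ cases), and check that all the weights $i$ listed in Theorem~\ref{parameter-1} satisfy $\delta \le i \le w$. For such $i$, Theorem~\ref{design parameter} says two codewords with the same support are scalar multiples, so each support corresponds to exactly $p-1$ codewords of weight $i$; hence $b = A_i/(p-1)$. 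Then I substitute $v = p^m$, $t = 2$, $k = i$, $b = A_i/(p-1)$ into~(\ref{condition}), namely $b\binom{i}{2} = \lambda\binom{p^m}{2}$, and solve
\[
\lambda \;=\; \frac{A_i\, i(i-1)}{(p-1)\, p^m(p^m-1)}.
\]
Carrying this out weight-by-weight, with the $A_i$ values from the three tables, yields exactly the pairs $(i,\lambda)$ listed in parts (1), (2), (3); this is the bulk of the work but it is entirely routine algebraic simplification of ratios of products of powers of $p$.

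The main obstacle I anticipate is twofold and both parts are bookkeeping rather than conceptual. First, verifying that every listed weight $i$ satisfies $i \le w$ so that Theorem~\ref{design parameter} legitimately gives $b = A_i/(p-1)$ — the largest weights (near $p^{m-1}(p-1)$, and especially the weight $p^m$ with multiplicity $p-1$, which corresponds to the all-equal nonzero codewords and must be handled or excluded separately since its support is all of $\mathcal{P}$ and gives only the trivial design) need care, and one must confirm $w$ is large enough to cover the intended range while small enough to exclude the degenerate cases. Second, the algebraic simplification of $\lambda = A_i i(i-1)/\big((p-1)p^m(p^m-1)\big)$ into the closed forms shown is lengthy: each $A_i$ already contains factors like $(p^m-1)$, $(p^d+1)$, $(p^{2d}-1)$ that must cancel correctly against the denominator, and a single sign or exponent slip propagates. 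I would organize this by factoring $i$ first (each $i$ is $p^{e}$ times a small polynomial in powers of $p$), cancelling the common $(p^m-1)$ and as much of $p^m$ as the valuation of $A_i i$ allows, and then matching against the stated expression; I would spot-check one or two cases numerically (e.g. $p=3$, $s=2$, $l=1$, so $m=4$, $d=d'=1$ odd) to guard against transcription errors in the tables.
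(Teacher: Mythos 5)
Your proposal follows essentially the same route as the paper: establish affine invariance (Lemma \ref{affine invariant} plus Lemma \ref{Kasami-Lin-Peterson}(2)) so that Theorem \ref{2-design} gives the $2$-designs, invoke Theorem \ref{design parameter} to conclude that the number of distinct supports is $A_i/(p-1)$, and then solve $b\binom{i}{2}=\lambda\binom{p^m}{2}$ for $\lambda$ using the multiplicities from Tables \ref{4}--\ref{6}. The only difference is that you spell out the bookkeeping (checking $i\le w$, excluding the weight $p^m$, and the algebraic cancellations) that the paper's proof leaves implicit, which is a correct and welcome elaboration rather than a different argument.
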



\section{Proofs of the main results}\label{section-4}

In this section, we will prove the results of weight distribution given in Theorem \ref{weight1} and the corresponding 2-designs described in Theorem \ref{$2-$design-1} in Section 3.  We also use Magma programs to illustrate  some examples. We begin this section by the proof of the weight distribution of the code ${\overline{{\mathbb{C}}^{\bot}}}^{\bot}.$

\begin{proof}[Proof of Theorem \ref{weight1}]
For each nonzero codeword $\mathbf{c}(a,b,c,h)=(c_0, c_1,\ldots, c_n)$ in ${\overline{{\mathbb{C}}^{\bot}}}^{\bot},$ the Hamming weight of $\mathbf{c}(a,b,c,h)$ is
\begin{equation}\label{weight formula-1}
w_H(\mathbf{c}(a,b,c,h))=n+1-T(a,b,c,h)=p^m-T(a,b,c,h),
\end{equation}
where
 \begin{eqnarray*}
 T(a,b,c,h)= |\{x\in \mathbb{F}_q: & & Tr_s(ax^{p^s+1})+Tr_m( bx^{p^l+1}+cx)+h=0,\\
& & a\in \mathbb{F}_{p^s},b,c \in \mathbb{F}_q, h\in \mathbb{F}_p\}|.
 \end{eqnarray*}
Then, it follows from the orthogonality of exponential sum and the definition of function $S(a,b,c)$ in  Eq.(\ref{S(a.b.c)}) that
\begin{eqnarray}\label{T(a,b,c,h)}
T(a,b,c,h)&=&\frac{1}{p}\sum\limits_{y \in \mathbb{F}_p}\sum\limits_{x \in \mathbb{F}_q}\zeta_p^{y[Tr_s(ax^{p^s+1})+Tr_m( bx^{p^l+1}+cx)+h]}\nonumber\\
&=&\frac{1}{p}\sum\limits_{y \in \mathbb{F}_p}\zeta_p^{yh}\sum\limits_{x \in \mathbb{F}_q}\zeta_p^{y[Tr_s(ax^{p^s+1})+Tr_m( bx^{p^l+1}+cx)]}\nonumber\\
&=&p^{m-1}+\frac{1}{p}\sum\limits_{y \in \mathbb{F}_p^*}\zeta_p^{yh}\sigma_y(S(a,b,c)).
\end{eqnarray}
Meanwhile, it follows from Lemma \ref{value distribution} that for $\varepsilon=\pm 1$ and $j \in \mathbb{F}_p^*$,
\begin{eqnarray*}
S(a,b,c)& \in &\{ \varepsilon p^s, \varepsilon \zeta_p^jp^s, \varepsilon\sqrt{p^*}p^{s+\frac{d-1}{2}}, \varepsilon\zeta_p^j\sqrt{p^*}p^{s+\frac{d-1}{2}}, -p^{s+d}, \\&& -\zeta_p^jp^{s+d},0 , p^m\}
\end{eqnarray*}
if $d'=d$ is odd, and
\begin{eqnarray*}
S(a,b,c)\in \{  \varepsilon p^s,  \varepsilon \zeta_p^jp^s,  \varepsilon p^{s+\frac{d}{2}}, \varepsilon\zeta_p^jp^{s+\frac{d}{2}}, -p^{s+d}, -\zeta_p^jp^{s+d},0 , p^m\}
\end{eqnarray*}
if $d'=d$ is even, and otherwise
\begin{eqnarray*}
S(a,b,c)\in \{-p^s, -\zeta_p^jp^s, p^{s+d}, p^{s+d}\zeta_p^j, -p^{s+2d}, -p^{s+2d}\zeta_p^j, 0, p^m\}.
\end{eqnarray*}
Then, if $d'=d$ is odd, plugging the corresponding values of $S(a,b,c)$ into Lemma \ref{Cyclotomic fields} leads to that
\begin{eqnarray*}
\sigma_y(S(a,b,c))=\left\{
\begin{array}{ll}
S(a,b,c),   & \mathrm{if}\,\   S(a,b,c)\in \{0, \varepsilon p^s,-p^{s+d},p^m\}, \\
\varepsilon p^s \zeta_p^{yj},  & \mathrm{if}\,\  S(a,b,c)=\varepsilon p^s\zeta_p^j,      \\
 \varepsilon \sqrt{p^*}p^{s+\frac{d-1}{2}}\eta'(y),   & \mathrm{if}\,\  S(a,b,c)=\varepsilon\sqrt{p^*}p^{s+\frac{d-1}{2}}, \\
\varepsilon \sqrt{p^*}p^{s+\frac{d-1}{2}}\eta'(y)\zeta_p^{yj}, &  \mathrm{if}\,\    S(a,b,c)=\varepsilon \zeta_p^{j}\sqrt{p^*}p^{s+\frac{d-1}{2}},  \\
-p^{s+d}\zeta_p^{yj}, &  \mathrm{if}\,\   S(a,b,c)=-p^{s+d}\zeta_p^{j} .
\end{array}
\right.
\end{eqnarray*}
Therefore, following from the definition of $T(a,b,c,h)$ and Eq.(\ref{T(a,b,c,h)}), together with a simple calculation,  we obtain Table \ref{8} which describes the values of $T(a,b,c,h)$ and the corresponding conditions, where $j\in \mathbb{F}_p$, which is different from others in this paper.
\begin{table}
\begin{center}
\caption{The value of $T(a,b,c,h)$ when $d'=d$ is odd ($j\in \mathbb{F}_p$)}\label{8}
\begin{tabular}{ll}
\hline\noalign{\smallskip}
Value  &  Corresponding Condition   \\
\noalign{\smallskip}
\hline\noalign{\smallskip}
$p^{m-1}+\varepsilon p^{s-1}(p-1) $  &   $ S(a,b,c)=\varepsilon p^s\zeta_p^j~\mathrm{and}~ h+j=0$    \\
$ p^{m-1}-\varepsilon p^{s-1} $  &   $ S(a,b,c)=\varepsilon p^s\zeta_p^j~\mathrm{and}~ h+j\neq0$     \\
$ 0 $  &  $ S(a,b,c)= p^m~\mathrm{and}~ h\neq0$      \\
$  p^{m-1}\pm(-1)^{\frac{p-1}{2}}p^{s+\frac{d-1}{2}} $  &  $S(a,b,c)=\varepsilon \sqrt{p^*}p^{s+\frac{d-1}{2}}\zeta_p^j~\mathrm{and}~ \eta'(h+j)=\pm\varepsilon$\\
$p^{m-1}-p^{s+d-1}(p-1)$  &  $S(a,b,c)=-p^{s+d}\zeta_p^j ~\mathrm{and}~ h+j=0$  \\
$p^{m-1}+p^{s+d-1}$ &  $ S(a,b,c)$ $=-p^{s+d}\zeta_p^j ~\mathrm{and}~ h+j\neq0 $  \\
$p^{m-1}$ &  $S(a,b,c)=0~ \mathrm{or}~ S(a,b,c)=\varepsilon\sqrt{p^*}\zeta_p^jp^{s+\frac{d-1}{2}}~ \mathrm{and}~ h+j = 0 $  \\
$p^m$ & $S(a,b,c)=p^m~ \mathrm{and}~ h = 0 $\\
\noalign{\smallskip}
\hline
\end{tabular}
\end{center}
\end{table}

Consequently, by Lemmas \ref{Gauss} and  \ref{value distribution},  Eq.(\ref{weight formula-1}) and the orthogonality of exponential sum, we get the weight distribution of  ${\overline{{\mathbb{C}}^{\bot}}}^{\bot}$ in  Table \ref{9}  when $d'=d$ is odd.
\begin{table}
\begin{center}
\caption{The weight  distribution of  ${\overline{{\mathbb{C}}^{\bot}}}^{\bot}$ when $d'=d$ is odd}\label{9}
\begin{tabular}{ll}
\hline\noalign{\smallskip}
Value  &  Multiplicity   \\
\noalign{\smallskip}
\hline\noalign{\smallskip}
$(p-1)(p^{m-1}-p^{s-1})$  &  $M_1+(p-1)M_3$ \\
$p^{m-1}(p-1)+p^{s-1}$  &  $ (p-1)M_1+(p-1)^2M_3 $    \\
$ (p-1)(p^{m-1}+p^{s-1})$  &  $M_2+(p-1)M_4 $     \\
$ p^{m-1}(p-1)-p^{s-1} $  &  $(p-1)M_2+(p-1)^2M_4$     \\
$ p^{m-1}(p-1)\pm(-1)^{\frac{p-1}{2}}p^{s+\frac{d-1}{2}}$  &  $(p-1)M_5+\frac{(p-1)^2}{2}(M_{6,1}+M_{6,-1}) $  \\
$p^{s+d-1}(p-1)(p^{s-d}+1)$ &  $ M_7+(p-1)M_8$  \\
$p^{m-1}(p-1)-p^{s+d-1}$ &  $ (p-1)M_7+(p-1)^2M_8$  \\
$p^{m-1}(p-1)$ &  $ pM_9+2M_5+(p-1)(M_{6,1}+M_{6,-1})$  \\
$p^m$ &  $ p-1 $  \\
\noalign{\smallskip}
\hline
\end{tabular}
\end{center}
\end{table}

In a similar manner, we can obtain the weight distribution of ${\overline{{\mathbb{C}}^{\bot}}}^{\bot}$ in Table \ref{10} when $d'=d$ is even and in Table \ref{11} when  $d'=2d$, respectively.
\begin{table}
\begin{center}
\caption{The weight  distribution of ${\overline{{\mathbb{C}}^{\bot}}}^{\bot}$ when $d'=d$ is even}\label{10}
\begin{tabular}{ll}
\hline\noalign{\smallskip}
Value  &  Multiplicity   \\
\noalign{\smallskip}
\hline\noalign{\smallskip}
$p^{s-1}(p-1)(p^s-1)$  &  $M_1+(p-1)M_3$ \\
$p^{s-1}(p^{s+1}-p^s+1)$  &  $ (p-1)M_1+(p-1)^2M_3 $    \\
$ p^{s-1}(p-1)(p^s+1)$  &  $M_2+(p-1)M_4 $     \\
$ p^{s-1}(p^{s+1}-p^s-1)$  &  $(p-1)M_2+(p-1)^2M_4$     \\
$ p^{s+\frac{d}{2}-1}(p^{s-\frac{d}{2}+1}-p^{s-\frac{d}{2}}\pm1)$  &  $(p-1)M_{5,\pm1}+(p-1)^2M_{6,\pm1} $  \\
$p^{s+\frac{d}{2}-1}(p-1)(p^{s-\frac{d}{2}}\pm1)$ &  $M_{5,\mp1}+(p-1)M_{6,\mp1}$  \\
$p^{s+d-1}(p-1)(p^{s-d}+1)$ &  $M_7+(p-1)M_8$  \\
$p^{s+d-1}(p^{s-d+1}-p^{s-d}-1)$ &  $(p-1)M_7+(p-1)^2M_8 $  \\
$p^{m-1}(p-1)$ &  $pM_9 $  \\
$p^m$ &  $p-1 $  \\
\noalign{\smallskip}
\hline
\end{tabular}
\end{center}
\end{table}

\begin{table}
\begin{center}
\caption{The weight distribution of ${\overline{{\mathbb{C}}^{\bot}}}^{\bot}$ when $d'=2d$}\label{11}
\begin{tabular}{ll}
\hline\noalign{\smallskip}
Value  &  Multiplicity   \\
\noalign{\smallskip}
\hline\noalign{\smallskip}
$p^{s-1}(p-1)(p^s+1)$  &  $M_1+(p-1)M_2$ \\
$p^{s-1}(p^{s+1}-p^s-1)$  &  $ (p-1)M_1+(p-1)^2M_2 $    \\
$p^{s+d-1}(p-1)(p^{s-d}-1)$  &  $M_3+(p-1)M_4 $     \\
$ p^{s+d-1}(p^{s-d+1}-p^{s-d}+1)$  &  $(p-1)M_3+(p-1)^2M_4$     \\
$ p^{s+2d-1}(p-1)(p^{s-2d}+1) $  &  $M_5+(p-1)M_6$     \\
$ p^{s+2d-1}(p^{s-2d+1}-p^{s-2d}-1)$  &  $(p-1)M_5+(p-1)^2M_6 $  \\
$p^{m-1}(p-1)$ &  $pM_7$  \\
$p^m$ &  $p-1 $  \\
\noalign{\smallskip}
\hline
\end{tabular}
\end{center}
\end{table}

Thus we completed the proof of Theorem \ref{weight1} by applying the corresponding values of $M_i$ or $M_{i,j}$ in Tables \ref{1}-\ref{3} into Tables \ref{9}-\ref{11}, respectively.
\end{proof}

Below, we will derive infinite families of $2$-designs from ${\overline{{\mathbb{C}}^{\bot}}}^{\bot}$. To this end, we first prove that the code  is affine-invariant.

\begin{lemma}\label{affine invariant}
The extended code $\overline{{\mathbb{C}}^{\bot}}$ is affine-invariant.
\end{lemma}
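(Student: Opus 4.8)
The plan is to apply Lemma~\ref{Kasami-Lin-Peterson}(1), which reduces the claim to a combinatorial condition on the defining set of $\overline{{\mathbb{C}}^{\bot}}$: I must show that whenever $s$ lies in the defining set $\overline{T}$ of $\overline{{\mathbb{C}}^{\bot}}$, then every $r \in \mathcal{P}$ with $r \preceq s$ also lies in $\overline{T}$. First I would identify the defining set of ${\mathbb{C}}^{\bot}$ itself. Since $\mathbb{C}$ has parity-check polynomial $h_1(x)h_{p^s+1}(x)h_{p^l+1}(x)$, its nonzeros are $\alpha^{e}$ for $e$ in the $p$-cyclotomic cosets $C_1 \cup C_{p^s+1} \cup C_{p^l+1}$; hence the defining set of $\mathbb{C}$ is the complement, and a standard computation (generator polynomial of the dual is $x^k h(x^{-1})/h(0)$) gives that the defining set $T^{\bot}$ of ${\mathbb{C}}^{\bot}$ is $\Z_n \setminus (-1)\cdot(C_1 \cup C_{p^s+1} \cup C_{p^l+1})$, i.e.\ $T^{\bot} = \{0 \le j \le n-1 : j \notin C_{-1}\cup C_{-(p^s+1)} \cup C_{-(p^l+1)}\}$. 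Equivalently, $j \notin T^{\bot}$ iff $j$ is in the cyclotomic coset of $-1$, $-(p^s+1)$, or $-(p^l+1)$ modulo $n=p^m-1$.

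Next I would pass to the extended code. There is a well-known description (see Kasami--Lin--Peterson): the defining set of $\overline{{\mathbb{C}}^{\bot}}$ is $\overline{T} = T^{\bot} \cup \{0\}$ if $0 \notin T^{\bot}$, and more precisely one must be careful that $0 \in \overline{T}$ always and the nonzero part of $\overline{T}$ agrees with $T^{\bot}$ restricted to $\{1,\dots,n-1\}$ — but the key point is to characterize $\overline{T}$ via the trace representation already given in Eq.(\ref{code-1}): since ${\overline{{\mathbb{C}}^{\bot}}}^{\bot}$ consists of the evaluations $Tr_s(ax^{p^s+1}) + Tr_m(bx^{p^l+1}+cx) + h$, its nonzero coordinate exponents (as a function on $\mathbb{F}_q$) are supported on $p$-adic weight patterns dual to $\{0,\, p^l+1,\, p^s+1\}$ and their cyclotomic shifts. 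So I would instead directly determine $\overline{T}$ as the set of $j \in \{0,1,\dots,p^m-1\}$ such that $j$ is \emph{not} $p$-equivalent to $p^m-1-e$ for $e \in \{1, p^l+1, p^s+1\}$ together with the all-ones element being handled correctly, then verify the downward-closure condition $r \preceq s \Rightarrow r \in \overline{T}$.

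The substance of the argument — and the main obstacle — is the purely number-theoretic verification that $\overline{T}$ is closed under $\preceq$. Concretely, I expect to argue by contradiction: suppose $s \in \overline{T}$ but some $r \preceq s$ has $r \notin \overline{T}$. Then $r$ lies in one of the three excluded cyclotomic cosets, so $r \equiv p^i(p^m-1-e) \pmod{p^m-1}$ for some $i$ and some $e \in \{1,\,p^l+1,\,p^s+1\}$. One then analyzes the $p$-adic digit sum (Hamming weight in base $p$): the excluded elements $p^m-1-e$ have a very constrained digit pattern — for $e=1$ the element is $(p-1,p-1,\dots,p-1,p-2)$ in base $p$, and for $e = p^t+1$ it is obtained by subtracting $1$ from digit $0$ and $1$ from digit $t$ of the all-$(p-1)$ word, with borrows handled as needed. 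Since $r \preceq s$ forces the digits of $r$ to be dominated digitwise by those of $s$, I must show this is incompatible with $s$ itself avoiding all three excluded cosets; the cleanest route is to track the $p$-adic weight $wt_p(\cdot)$ and the ``second-largest digit'' data, using that $wt_p$ is invariant under multiplication by $p$ and that $r \preceq s$ implies $wt_p(r) \le wt_p(s)$ with digitwise control. I would organize the casework by which of the three exponents $e$ is involved, and within each case use the constraints $\gcd(s,l)=d$, $m = 2s$, and $l \neq s$ to rule out the degenerate overlaps. This digit-chasing is routine in spirit but genuinely intricate, and it is where essentially all the work lies; once it is done, Lemma~\ref{Kasami-Lin-Peterson}(1) closes the proof immediately, and Lemma~\ref{Kasami-Lin-Peterson}(2) then gives that ${\overline{{\mathbb{C}}^{\bot}}}^{\bot} = \overline{{\mathbb{C}}^{\bot}}$'s dual is affine-invariant as well, feeding directly into Theorem~\ref{2-design}.
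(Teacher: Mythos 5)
Your reduction to Lemma \ref{Kasami-Lin-Peterson}(1) is the right move and is exactly what the paper does, but your identification of the defining set of $\mathbb{C}^{\bot}$ is wrong, and this error sends the rest of the argument in the wrong direction. The generator polynomial of $\mathbb{C}^{\bot}$ is the reciprocal $x^{k}h(x^{-1})/h(0)$ of the parity-check polynomial $h=h_1h_{p^s+1}h_{p^l+1}$ of $\mathbb{C}$, so its roots are the \emph{inverses of the roots of $h$}; hence the defining set of $\mathbb{C}^{\bot}$ is $(-1)\cdot\bigl(C_1\cup C_{p^s+1}\cup C_{p^l+1}\bigr)=C_{-1}\cup C_{-(p^s+1)}\cup C_{-(p^l+1)}$ itself, \emph{not} its complement. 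A dimension count confirms this: the defining set of $\mathbb{C}^{\bot}$ must have size $n-\dim(\mathbb{C}^{\bot})=\dim(\mathbb{C})=\tfrac{5m}{2}=m+m+s$, which is exactly the total size of the three cosets, whereas your complement has size $n-\tfrac{5m}{2}$ and would force $\dim(\mathbb{C}^{\bot})=\tfrac{5m}{2}=\dim(\mathbb{C})$, contradicting $\dim(\mathbb{C}^{\bot})=n-\tfrac{5m}{2}$ (and the paper's own $\dim({\overline{\mathbb{C}^{\bot}}}^{\bot})=\tfrac{5m}{2}+1$). The paper writes the defining set as $C_1\cup C_{p^l+1}\cup C_{p^s+1}$ (absorbing the harmless sign by relabelling $\alpha\mapsto\alpha^{-1}$), and with the correct set in hand the $\preceq$-closure is essentially trivial rather than ``genuinely intricate'': every element of $C_1$ is a power of $p$ and has $p$-weight $1$, and since $p^s(p^s+1)\equiv p^s+1 \pmod{p^m-1}$ every element of $C_{p^s+1}$ (and likewise $C_{p^l+1}$) has the form $p^i+p^j$ with $p$-weight $2$; so any $r\preceq u$ with $u\in\overline{T}$ is $0$, a power of $p$, or $u$ itself, and all of these lie in $\overline{T}=C_1\cup C_{p^l+1}\cup C_{p^s+1}\cup\{0\}$. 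That three-line observation is the paper's entire proof.

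Beyond the misidentification, the substantive step of your plan --- the digit-chasing verification of downward closure for your set --- is only sketched, and you yourself flag it as where ``essentially all the work lies.'' So as written the proposal has two gaps: it targets the wrong defining set, and it does not actually carry out a closure argument for any set. Correct the defining set to the union of the three (negated) cosets and the weight-$\le 2$ argument above closes the proof immediately; Lemma \ref{Kasami-Lin-Peterson}(2) then transfers affine-invariance to ${\overline{\mathbb{C}^{\bot}}}^{\bot}$ as you indicate.
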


\begin{proof}
It is easy to know that the defining set $T$ of the cyclic code $\mathbb{C}^{\bot}$ is $T =C_1 \cup C_{p^l+1} \cup C_{p^s+1}$. Since $0 \not \in T$, the defining set $\overline{T}$ of $\overline{{\mathbb{C}}^{\bot}}$ is given by $\overline{T} = C_1 \cup C_{p^l+1} \cup C_{p^s+1} \cup \{0\}$.
Let $u \in \overline{T} $ and $r \in \mathcal{P}$ with $r \preceq u $. In order to apply Lemma \ref{Kasami-Lin-Peterson}, we need to prove $r \in \overline{T}$.

 If $r=0,$ it is easy to verify that $r\in \overline{T}$. We consider now the case $r>0$. If $u \in  C_1$, then the Hamming weight $wt(u) = 1.$ Note the fact that  $r \preceq u$ implies  $wt(r) = 1$ and thus $r \in C_1 \subset  \overline{T}.$  If $u  \in C_{p^l+1}\cup C_{p^s+1}$, then the Hamming weight $wt(u) = 2.$ With the assumption that  $r \preceq u$, either $wt(r) = 1$ or $r = u.$ In both cases, $r \in  \overline{T}.$ The desired conclusion then follows
from Lemma \ref{Kasami-Lin-Peterson}.
Thus the proof is completed.
\end{proof}

Now we are ready to prove Theorems \ref{$2-$design-1} and  \ref{parameter-1}.

\begin{proof}[Proof of Theorem \ref{$2-$design-1}] By Lemma \ref{Kasami-Lin-Peterson}(2) and Lemma \ref{affine invariant}, we know ${\overline{{\mathbb{C}}^{\bot}}}^{\bot}$ is affine-invariant. Thus we  complete the proof by Theorem \ref{2-design}.
\end{proof}

\begin{proof}[Proof of Theorem \ref{parameter-1}]
One can easily verify that the minimum weight and the maximum weight of the code ${\overline{{\mathbb{C}}^{\bot}}}^{\bot}$ satisfy the condition of
 Theorem \ref{design parameter}. Hence,
 the number of supports of all codewords with weight $i\neq 0$ in the code ${\overline{{\mathbb{C}}^{\bot}}}^{\bot}$ is equal to $A_i/(p-1)$ for each $i,$ where $A_i$ is described  in Tables \ref{4}-\ref{6}.  Then the desired conclusions follow from Theorem \ref{$2-$design-1} and  Eq.(\ref{condition}).
\end{proof}

We conclude this section by presenting some examples to illustrate the validity of the results.

\begin{example}\label{example1}
If $(p, s, l)=(3, 2, 1)$, then the code ${\overline{{\mathbb{C}}^{\bot}}}^{\bot}$ has parameters $[81,11,\\45]$ and weight enumerator $1+6840z^{45}+24300z^{48}+27216z^{51}+49920z^{54}+48600z^{57}+13608z^{60}+6480z^{63}+180z^{72}+2z^{81},$ which  agrees with  the results given in Theorem \ref{weight1}.
\end{example}


\begin{example}\label{example2}
If $(p, s, l)=(3, 3, 2)$, then the code ${\overline{{\mathbb{C}}^{\bot}}}^{\bot}$ has parameters $[729,16,\\459]$ and weight enumerator $1+1710072z^{459}+5572476z^{468}+6937164z^{477}+12562368z^{486}+11144952z^{495}+3468582z^{504}+1592136z^{513}+58968z^{540}+2z^{729},$ which agrees with  the results given in Theorem \ref{weight1}.
\end{example}

\begin{example}\label{example3}
If $(p, s, l)=(3, 3, 1)$, then the code ${\overline{{\mathbb{C}}^{\bot}}}^{\bot}$ has parameters $[729,16,\\405]$ and weight enumerator $1+3276z^{405}+442260z^{432}+20470320z^{477}+\\11009544z^{486}+10235160z^{504}+884520z^{513}+1638z^{648}+2z^{729},$ which agrees with  the results given in Theorem \ref{weight1}.
\end{example}

\begin{example}\label{example-4}
If $(p,s,l)=(3,2,1)$, then the code ${\overline{{\mathbb{C}}^{\bot}}}^{\bot}$ has parameters $[81, 11, 45]$ and the weight distribution is given in Example \ref{example1}. It gives $2$-$(81, i, \lambda)$ designs with the following pairs $(i, \lambda):$
$$(45, 1045), (48, 4230), (51, 5355), (54, 11024),$$ $$(57, 11970), (60, 3717), (63, 1953), (72, 71),$$
which confirms the results given in Theorem \ref{parameter-1}.
\end{example}

\section{Concluding remarks}\label{section-5}
In this paper, we first determined the weight distributions of a class of linear codes derived from the duals of extended non-binary Kasami cyclic codes. Using the properties of affine-invariant codes, we then found that ${\overline{{\mathbb{C}}^{\bot}}}^{\bot}$ could give infinite families of  $2$-designs and explicitly determined their parameters. 

\section*{Appendix I}
\begin{table}
\begin{center}
\caption{The value distribution of $S(a,b,c)$ when $d'=d$ is odd}\label{1}
\begin{tabular}{ll}
\hline\noalign{\smallskip}
Value  &  Multiplicity   \\
\noalign{\smallskip}
\hline\noalign{\smallskip}
$p^s$  &  $M_1= \frac{1}{2}p^{s+d-1}(p^s+1)(p^s+p-1)(p^m-1)/(p^d+1)$ \\
$-p^s$  &  $ M_2=\frac{1}{2}p^{s+d-1}(p^s-1)(p^s-p+1)(p^m-2p^{m-d}+1)/(p^d-1) $    \\
$ \zeta^j_pp^s$  &  $M_3=\frac{1}{2}p^{s+d-1}(p^m-1)^2/(p^d+1) $     \\
$ -\zeta^j_pp^s $  &  $M_4=\frac{1}{2}p^{s+d-1}(p^m-2p^{m-d}+1)(p^m-1)/(p^d-1)$     \\
$  \varepsilon\sqrt{p^*}p^{s+\frac{d-1}{2}}  $  &  $M_5=\frac{1}{2}p^{3s-2d-1}(p^m-1) $     \\
$ \varepsilon\sqrt{p^*}p^{s+\frac{d-1}{2}}\zeta^j_p$  &  $M_{6,\varepsilon}=\frac{1}{2}p^{m-\frac{3d+1}{2}}(p^{s-\frac{d+1}{2}}+\varepsilon\eta'(-j))(p^m-1) $  \\
$-p^{s+d}$ &  $ M_7=p^{s-d-1}(p^{s-d}-1)(p^{s-d}-p+1)(p^m-1)/(p^{2d}-1)$  \\
$-p^{s+d}\zeta^j_p$ &  $ M_8= p^{s-d-1}(p^{m-2d}-1)(p^m-1)/(p^{2d}-1)$  \\
$0$ &  $ M_9=(p^{3s-d}-p^{3s-2d}+p^{3s-3d}-p^{m-2d}+1)(p^m-1) $  \\
$p^m$ &  $ 1 $  \\
\noalign{\smallskip}
\hline
\end{tabular}
\end{center}
\end{table}

\begin{table}
\begin{center}
\caption{The value distribution of $S(a,b,c)$ when $d'=d$ is even}\label{2}
\begin{tabular}{ll}
\hline\noalign{\smallskip}
Value  &  Multiplicity   \\
\noalign{\smallskip}
\hline\noalign{\smallskip}
$p^s$  &  $M_1= \frac{1}{2}p^{s+d-1}(p^s+1)(p^s+p-1)(p^m-1)/(p^d+1)$ \\
$-p^s$  &  $ M_2=\frac{1}{2}p^{s+d-1}(p^s+1)(p^s-p+1)(p^m-2p^{m-d}+1)/(p^d-1) $    \\
$ \zeta^j_pp^s$  &  $M_3=\frac{1}{2}p^{s+d-1}(p^m-1)^2/(p^d+1) $     \\
$ -\zeta^j_pp^s $  &  $M_4=\frac{1}{2}p^{s+d-1}(p^m-2p^{m-d}+1)(p^m-1)/(p^d-1)$     \\
$  \varepsilon p^{s+\frac{d}{2}}  $  &  $M_{5,\varepsilon}=\frac{1}{2}p^{m-\frac{3d}{2}-1}(p^{s-\frac{d}{2}}+\varepsilon(p-1))(p^m-1) $     \\
$ \varepsilon p^{s+\frac{d}{2}}\zeta^j_p$  &  $M_{6,\varepsilon}=\frac{1}{2}p^{m-\frac{3d}{2}-1}(p^{s-\frac{d}{2}}-\varepsilon)(p^m-1) $  \\
$-p^{s+d}$ &  $ M_7=p^{s-d-1}(p^{s-d}-1)(p^{s-d}-p+1)(p^m-1)/(p^{2d}-1)$  \\
$-p^{s+d}\zeta^j_p$ &  $ M_8= p^{s-d-1}(p^{s-d}-1)(p^{s-d}+1)(p^m-1)/(p^{2d}-1)$  \\
$0$ &  $ M_9=(p^{3s-d}-p^{3s-2d}+p^{3s-3d}-p^{m-2d}+1)(p^m-1)$  \\
$p^m$ &  $ 1 $  \\
\noalign{\smallskip}
\hline
\end{tabular}
\end{center}
\end{table}

\begin{table}
\begin{center}
\caption{The value distribution of $S(a,b,c)$ when $d'=2d$}\label{3}
\begin{tabular}{ll}
\hline\noalign{\smallskip}
Value  &  Multiplicity   \\
\noalign{\smallskip}
\hline\noalign{\smallskip}
$-p^s$  &  $ M_1=\frac{p^{s+3d-1}(p^s-1)(p^s-p+1)(p^m-p^{m-2d}-p^{m-3d}+p^s-p^{s-d}+1)}{(p^d+1)(p^{2d}-1)} $    \\
$ -\zeta^j_pp^s $  &  $M_2=\frac{p^{s+3d-1}(p^m-p^{m-2d}-p^{m-3d}+p^s-p^{s-d}+1)(p^m-1)}{(p^d+1)(p^{2d}-1)}$     \\
$  p^{s+d}  $  &  $M_3=\frac{p^{s-1}(p^{s-d}+p-1)(p^s+p^{s-d}+p^{s-2d}+1)(p^m-1)}{(p^d+1)^2}$      \\
$  p^{s+d}\zeta^j_p$  &  $M_4=\frac{p^{s-1}(p^{s-d}-1)(p^s+p^{s-d}+p^{s-2d}+1)(p^m-1)}{(p^d+1)^2} $  \\
$-p^{s+2d}$ &  $ M_5=\frac{p^{s-2d-1}(p^{s-d}-1)(p^{s-2d}-p+1)(p^m-1)}{(p^d+1)(p^{2d}-1)}$  \\
$-p^{s+2d}\zeta^j_p$ &  $ M_6=\frac{p^{s-2d-1}(p^{s-d}-1)(p^{s-2d}+1)(p^m-1)}{(p^d+1)(p^{2d}-1)}$  \\
$0$ &  $ M_7=(p^{3s-d}-p^{3s-2d}+p^{3s-3d}-p^{3s-4d}+p^{3s-5d}+p^{m-d}-$\\&$2p^{m-2d}+p^{m-3d}-p^{m-4d}+1)(p^m-1) $  \\
$p^m$ &  $ 1 $  \\
\noalign{\smallskip}
\hline
\end{tabular}
\end{center}
\end{table}


\section*{ACKNOWLEDGMENTS}
The authors would like to thank the anonymous referees for their helpful comments and suggestions, which have greatly improved the presentation and quality of this paper. The research of X. Du was supported by NSFC No. 61772022. The research of C. Fan was supported by NSFC No. 11971395.



\begin{thebibliography}{10}

\bibitem{AK92}
Assmus Jr. E F, Key J D.
\newblock Designs and their codes.
\newblock Cambridge: Cambridge Univ Press, 1992.

\bibitem{AM69}
Assmus Jr. E F, Mattson Jr. H F.
\newblock New $5$-designs.
\newblock { J. Combin. Theory}, 1969, 6: 122--152.

\bibitem{AM74}
Assmus Jr. E F, Mattson Jr H F.
\newblock Coding and combinatorics.
\newblock { SIAM Rev.}, 1974, 16: 349--388.

\bibitem{AB92}
Antweiler M, {\"o}mer L B.
\newblock Complex sequences over {${\rm GF}(p^M)$} with a two-level
  autocorrelation function and a large linear span.
\newblock { IEEE Trans. Inform. Theory}, 1992, 38(1): 120--130.

\bibitem{BJL99}
Beth T, Jungnickel D, Lenz H.
\newblock {Design theory. {V}ol. {II}}, volume~78 of { Encyclopedia of
  Mathematics and its Applications}.
\newblock Cambridge: Cambridge Univ Press, second edition, 1999.

\bibitem{CM06}
Colbourn C J, Mathon R.
\newblock Steiner systems.
\newblock In { Handbook of Combinatorial Designs, Second Edition}, pages
  128--135. Chapman and Hall/CRC, 2006.

\bibitem{Coulter981}
Coulter R S.
\newblock Explicit evaluations of some {W}eil sums.
\newblock { Acta Arith.}, 1998, 83(3): 241--251.

\bibitem{Coulter982}
Coulter R S.
\newblock Further evaluations of {W}eil sums.
\newblock { Acta Arith.}, 1998, 86(3): 217--226.

\bibitem{Ding15b}
Ding C S.
\newblock { Codes from difference sets}.
\newblock World Scientific, 2015.

\bibitem{Ding18b}
Ding C S.
\newblock { Designs from linear codes}.
\newblock World Scientific, 2018.

\bibitem{Ding182}
Ding C S.
\newblock Infinite families of 3-designs from a type of five-weight code.
\newblock { Des. Codes Cryptogr.}, 2018, 86(3): 703--719.

\bibitem{DL17}
Ding C S, Li C.
\newblock Infinite families of 2-designs and 3-designs from linear codes.
\newblock { Discrete Math.}, 2017, 340(10): 2415--2431.

\bibitem{DD15}
Ding K, Ding C S.
\newblock A class of two-weight and three-weight codes and their applications
  in secret sharing.
\newblock { IEEE Trans. Inform. Theory}, 2015, 61(11): 5835--5842.

\bibitem{DWTW190}
Du X N, Wang R, Tang C M, et al.
\newblock Infinite families of 2-designs from two classes of linear codes.
\newblock {arXiv:1903.07459}.


\bibitem{DWTW191}
Du X N, Wang R, Tang C M, et al.
\newblock Infinite families of $2$-designs from two classes of binary cyclic codes with three nonzeros.
\newblock {arXiv:1903.08153 [math.CO]}.


\bibitem{DWF192}
Du X N, Wang R, Fan C L.
\newblock Infinite families of $2$-designs from a class of cyclic codes.
\newblock { J. Comb. Des.}, 2019, 1--14.

\bibitem{FJ08}
Feng K Q, Luo J Q.
\newblock Weight distribution of some reducible cyclic codes.
\newblock { Finite Fields Appl.}, 2008, 14(2): 390--409.

\bibitem{FY05}
Fitzgerald R W, Yucas J L.
\newblock Sums of {G}auss sums and weights of irreducible codes.
\newblock { Finite Fields Appl.}, 2005, 11(1): 89--110.

\bibitem{HP03}
Huffman W C, Pless V.
\newblock { Fundamentals of error-correcting codes}.
\newblock Cambridge: Cambridge Univ Press, 2003.

\bibitem{IR90}
Ireland K, Rosen M.
\newblock { A classical introduction to modern number theory}, volume~84 of
  { Graduate Texts in Mathematics}.
\newblock Springer-Verlag, New York, second edition, 1990.

\bibitem{KLP67}
Kasami T, Lin S, Peterson W W.
\newblock Some results on cyclic codes which are invariant under the affine
  group and their applications.
\newblock {Information and Control}, 1967, 11: 475--496.

\bibitem{KP95}
Kennedy G T, Pless V.
\newblock A coding-theoretic approach to extending designs.
\newblock { Discrete Math.}, 1995, 142(1-3): 155--168.

\bibitem{KP03}
Kim J L, Pless V.
\newblock Designs in additive codes over {$GF(4)$}.
\newblock { Des. Codes Cryptogr.}, 2003, 30(2): 187--199.

\bibitem{LTW09}
Luo J Q, Tang Y, Wang H.
\newblock Exponential sums, cycle codees and sequences: the odd characteristic Kasami case.
\newblock { arXiv}: 2009, 0902.4508v1 [cs.IT].

\bibitem{LN97}
Lidl R, Niederreiter H.
\newblock {Finite fields}, volume~20 of { Encyclopedia of Mathematics
  and its Applications}.
\newblock Cambridge: Cambridge Univ Press, second edition, 1997.

\bibitem{MS771}
MacWilliams F J, Sloane N J A.
\newblock { The theory of error-correcting codes. {I}}.
\newblock North-Holland Publishing Co., Amsterdam-New York-Oxford, 1977.
\newblock North-Holland Mathematical Library, Vol. 16.

\bibitem{RR10}
Reid C, Rosa A.
\newblock Steiner systems $ {S} (2, 4, v) $-a survey.
\newblock { The Electronic Journal of Combinatorics}, 2010, \#DS18: 1--34.

\bibitem{Ton98}
Tonchev V D.
\newblock Codes and designs.
\newblock In V.~Pless and W.~C. Huffman, editors, { Handbook of coding
  theory, {V}ol. {I}, {II}}, pages 1229--1267. North-Holland, Amsterdam, 1998.

\bibitem{Ton07}
Tonchev V D.
\newblock Codes.
\newblock In C.~J. Colbourn and J.~H. Dinitz, editors, { Handbook of
  combinatorial designs}, Discrete Mathematics and its Applications (Boca
  Raton), pages xxii+984. Chapman \& Hall/CRC, Boca Raton, FL, second edition,
  2007.

\bibitem{Vlugt95}
Vlugt M van der.
\newblock Hasse-{D}avenport curves, {G}auss sums, and weight distributions of
  irreducible cyclic codes.
\newblock { J. Number Theory}, 1995, 55(2): 145--159.

\end{thebibliography}

\end{document}